\newcommand{\ep}{\varepsilon}
\newcommand{\N}{\mathbb{N}}
\newcommand{\R}{\mathbb{R}}
\newcommand{\dx}{\, {\rm d} x}
\newcommand{\dy}{\, {\rm d} y}
\newcommand{\dt}{\, {\rm d} t}
\newcommand{\dtau}{\, {\rm d} \tau}
\newcommand{\eps}{\varepsilon}
\renewcommand{\phi}{\varphi}
\newtheorem{lemma}{Lemma}[section]
\newtheorem{thm}[lemma]{Theorem}
\newtheorem{prop}[lemma]{Proposition}
\theoremstyle{definition}
\newtheorem{defi}[lemma]{Definition}
\newtheorem{rmk}[lemma]{Remark}
\numberwithin{equation}{section} \DeclareMathOperator*{\esssup}{ess\, sup} 
 \DeclareMathOperator*{\dom}{dom}
\begin{document}\title[Non-local convective problems with singular nonlinearity]{Fractional Dirichlet problems with singular and non-locally convective reaction}
%
\author[L. Gambera]{Laura Gambera}
\address[L. Gambera]{Dipartimento di Matematica e Informatica, Universit\`a degli Studi di Palermo, Via Archirafi 34, 90123 Palermo, Italy}
\email{laura.gambera@unipa.it}
\author[S.A. Marano]{Salvatore A. Marano}
\address[S.A. Marano]{Dipartimento di Matematica e Informatica, Universit\`a degli Studi di Catania, Viale A. Doria 6, 95125 Catania, Italy}
\email{marano@dmi.unict.it}
\begin{abstract}
In this paper, the existence of positive weak solutions to a Dirichlet problem driven by the fractional $(p,q)$-Laplacian and with reaction both weakly singular and non-locally convective (i.e., depending on the distributional Riesz gradient of solutions) is established. Due to the nature of the right-hand side, we address the problem via sub-super solution methods, combined with variational techniques, truncation arguments, as well as fixed point results. 
\end{abstract}
\let\thefootnote\relax
\footnote{{\bf{MSC 2020}}: 35J60, 35J75, 35D30.}
\footnote{{\bf{Keywords}}: fractional Dirichlet problem, weakly singular and non-locally convective reaction, fractional $(p,q)$-Laplacian, distributional Riesz fractional gradient, positive weak solution.}
%
%
\maketitle
\section{Introduction} 
Let $\Omega\subseteq\R^N$, $N\geq 2$, be a bounded domain with a $C^{1,1}$ boundary $\partial \Omega$, let $0<s_2\leq s\leq s_1\leq 1$, and let $2<q<p<\frac{N}{s_1}$ with $s_1p>1$.
%
Consider the problem 
\begin{equation}\label{prob}\tag{P}
\left\{
\begin{alignedat}{2}
(-\Delta)^{s_1}_p u+(-\Delta)^{s_2}_q u & = f(x,u)+g(x,D^s u)\;\; &&\mbox{in}\;\;\Omega,\\
u & >0 && \mbox{in}\;\;\Omega,\\
u & =0 && \mbox{in}\;\;\R^N\setminus\Omega,
\end{alignedat}
\right.
\end{equation}
where, given $r>1$ and $0<t<1$, $(-\Delta)_r^t$ denotes the (negative) fractional $r$-Laplacian, formally defined by
$$(-\Delta)^{t}_{r} u(x):=2\lim_{\ep\to 0^+}\int_{\R^N\setminus B_\ep(x)}
\frac{|u(x)-u(y)|^{r-2}(u(x)-u(y))}{|x-y|^{N+tr}}\dy,\quad x\in\R^N.$$
The symbol $D^s u$ indicates the \textit{distributional Riesz fractional gradient} of $u$ according to \cite{SS1,SS2}. If $u$ is sufficiently smooth and appropriately decays at infinity then
\begin{equation*}
D^s u(x):=c_{N,s}\lim_{\ep\to 0^+}\int_{\R^N\setminus B_\ep(x)}
\frac{u(x)-u(y)}{|x-y|^{N+s}}\frac{x-y}{|x-y|}\dy,\quad x\in\R^N,
\end{equation*}
with $c_{N,s}>0$; cf. \cite[p. 289]{SS2}. Moreover, $f:\Omega\times\R^+\to\R^+_0$ and $g:\Omega\times\R^N\to\R^+_0$ stand for Carathéodory's functions such that
\begin{equation}\label{hypf}
\tag{${\rm H_{f_1}}$}
\left\{
\begin{aligned}
&\liminf_{t\to 0^+} f(x,t)=:L> 0\quad\mbox{uniformly in $x\in\Omega$,}\\
&f(x,t)\le c_1 t^{-\gamma}+c_2 t^r \quad\forall\;(x,t)\in\Omega\times\R^+,\\
\end{aligned}
\right.
\end{equation}
\begin{equation}\label{hypf2}
\tag{${\rm H_{f_2}}$}
\text{$ t\mapsto\frac{f(\cdot,t)}{t^{q-1}}$ is strictly decreasing on $\R^+$},
\end{equation}
\begin{equation}\label{hypg}
\tag{${\rm H_g}$}
g(x,\xi)=c_3 (1+|\xi|^\zeta),\quad (x,\xi)\in\Omega\times\R^N,
\end{equation}
for suitable $\gamma\in (0,1),$ $c_i>0,$ $i=1,2,3$, $r,\zeta\in (1,p-1)$.

Since $s_2\leq s_1$, we are naturally led to solve problem \eqref{prob} in the fractional Sobolev space $W^{s_1,p}_0(\Omega)$. Precisely,
\begin{defi}
A function $u\in W^{s_1,p}_{0}(\Omega)$ is called a \textit{weak solution} of \eqref{prob} when $u>0$ a.e. in $\Omega$ and
\begin{equation*}
\begin{split}
& \int_{\R^{2N}}
\frac{|u(x)-u(y)|^{p-2}(u(x)-u(y))(\phi(x)-\phi(y))}{|x-y|^{N+s_1p}}\dx\dy\\
& \hskip1cm+\int_{\R^{2N}}
\frac{|u(x)-u(y)|^{q-2}(u(x)-u(y))(\phi(x)-\phi(y))}{|x-y|^{N+s_2q}}\dx\dy\\
& = \int_\Omega f(\cdot,u)\phi\dx+\int_\Omega g(\cdot,D^s u)\phi\dx\quad\forall\,
\phi\in W^{s_1,p}_0(\Omega).
\end{split}
\end{equation*}
\end{defi}
Let us next point out some hopefully newsworthy aspects, namely
\begin{itemize}
\item the driving differential operator is neither local nor homogeneous and no parameters appear on the right-hand side,
\item $f(x,\cdot)$ can be singular at zero, which means $\displaystyle{\lim_{t\to 0^+}} f(x,t)=+\infty$, and
\item the reaction is also non-locally convective, because $g$ depends on the distributional fractional gradient of solutions.
\end{itemize}
In latest years, the study of non-local differential equations has seen significant growth, mainly due to their many applications in real-world problems, such as game theory, finance, image processing, and materials science. A wealth of existence and uniqueness or multiplicity theorems are already available; see, e.g., \cite{FI,IPS,MM}. Several regularity results have also been published; let us mention \cite{IMS,IMS2} for the fractional $p$-Laplacian and \cite{G} as regards the fractional $(p,q)$-Laplacian. Finally, the survey \cite{DNPV} provides an exhaustive account on the corresponding functional framework, i.e., fractional Sobolev spaces. 

Dirichlet problems driven by non-local operators and with singular reactions were well investigated in \cite{CSM,G}. The work \cite{CSM} treats existence and uniqueness of solutions to the equation
$$(-\Delta)_p^su=\frac{a(x)}{u^\sigma}\;\;\mbox{in}\;\;\Omega,$$
where $\sigma>0$ and $a:\Omega\to\R^+$ fulfills appropriate conditions, while \cite{G} studies the more general situation
$$(-\Delta)^{s_1}_p u+(-\Delta)^{s_2}_q u=\frac{a_{\delta}(x)}{u^\sigma}\;\;\mbox{in} \;\;\Omega,$$
being $a_\delta\in L^{\infty}_{\text{loc}}(\Omega)$ a function that behaves like $d(x)^{-\delta}$,
%
%
with $d(x):={\rm dist}(x,\partial\Omega)$ and $\delta\in [0,s_1p)$. It should be noted that, contrary to \cite{CSM,G}, here, the reaction term $f(\cdot, u)$ is not necessarily a perturbation of $u^{-\sigma}$.

Finally, distributional fractional gradients were first introduced by Horváth \cite{H}, but gained traction and saw wider application especially after the two seminal papers of Shieh - Spector \cite{SS1, SS2}. Among recent contributions on this subject, we mention  \cite{SSS,CS1,B,CS2,BC}, as well as the references therein. The main reasons that contributed to spreading the use of Riesz gradients probably are: 1) Non-local versions of many classical results on Sobolev spaces can be obtained through them. 2) $D^s u$ formally tends to $\nabla u$ as $s\to 1^-$. 3) Significant geometrical and physical properties (invariance under translations or rotations, homogeneity of order $s$, etc.) remain true in this new context; cf. \cite{SI}. 

Although weakly singular (namely $\gamma<1$) problems are normally investigated via variational methods, here, the presence of $D^s u$ inside the reaction prevents this approach. That's way the existence of at least one positive solution to \eqref{prob} is established by means of sub-super solution arguments, variational and truncation techniques, besides Schauder's fixed point theorem. 

The paper is organized as follows. Preliminary facts are collected in Section \ref{prelim}. The next one shows that a suitable auxiliary problem, obtained by freezing the convection term, admits a unique solution, while \eqref{prob} is solved in Section \ref{mainresult}. 
\section{Preliminaries}\label{prelim}
Let $X$ be a real Banach space with topological dual $X^*$ and duality brackets $\langle\cdot,\cdot\rangle$. A function $A:X\to X^*$ is called:
\begin{itemize}
\item \emph{monotone} when $\langle A(x)-A(z),x-z\rangle\geq 0$ for all $x,z\in X$.
\item \emph{of type $(\mathrm{S})_+$} provided
\begin{equation*}
x_n\rightharpoonup x\;\;\mbox{in $X$,}\;\;\limsup_{n\to+\infty}\langle A(x_n),x_n-x\rangle \le 0\implies x_n\to x\;\;\mbox{in $X$.}   
\end{equation*}
\end{itemize}
The next elementary result will ensure that condition $(\mathrm{S})_+$ holds true for the fractional $(p,q)$-Laplacian.
\begin{prop}\label{sumop}
Let $A:X\to X^*$ be of type $(\mathrm{S})_+$ and let $B:X\to X^*$ be monotone. Then $A+B$ satisfies condition $(\mathrm{S})_+$. 
\end{prop}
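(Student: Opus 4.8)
The plan is to verify the $(\mathrm{S})_+$ defining implication directly for $A+B$. So suppose $x_n \rightharpoonup x$ in $X$ and $\limsup_{n\to+\infty}\langle (A+B)(x_n), x_n - x\rangle \le 0$; the goal is to conclude $x_n \to x$ strongly. Since $A$ is of type $(\mathrm{S})_+$, it suffices to show $\limsup_{n\to+\infty}\langle A(x_n), x_n - x\rangle \le 0$, after which strong convergence is immediate from the hypothesis on $A$.

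The key step is a lower-semicontinuity estimate for the monotone part. By monotonicity of $B$, for every $n$ we have $\langle B(x_n) - B(x), x_n - x\rangle \ge 0$, hence $\langle B(x_n), x_n - x\rangle \ge \langle B(x), x_n - x\rangle$. Since $x_n \rightharpoonup x$ and $B(x) \in X^*$ is fixed, $\langle B(x), x_n - x\rangle \to 0$, so $\liminf_{n\to+\infty}\langle B(x_n), x_n - x\rangle \ge 0$. Writing $\langle A(x_n), x_n - x\rangle = \langle (A+B)(x_n), x_n - x\rangle - \langle B(x_n), x_n - x\rangle$ and using the superadditivity of $\limsup$ together with the bound just obtained,
\begin{equation*}
\limsup_{n\to+\infty}\langle A(x_n), x_n - x\rangle \le \limsup_{n\to+\infty}\langle (A+B)(x_n), x_n - x\rangle - \liminf_{n\to+\infty}\langle B(x_n), x_n - x\rangle \le 0 - 0 = 0.
\end{equation*}

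Finally, applying the $(\mathrm{S})_+$ property of $A$ with the verified hypotheses $x_n \rightharpoonup x$ and $\limsup_{n\to+\infty}\langle A(x_n), x_n - x\rangle \le 0$ yields $x_n \to x$ in $X$, which is exactly what condition $(\mathrm{S})_+$ for $A+B$ requires. I do not anticipate a genuine obstacle here: the only mild care needed is the correct handling of the inequality $\limsup(a_n + b_n) \ge \limsup a_n + \liminf b_n$ (equivalently, splitting off a convergent-in-$\liminf$ term), and the observation that $\langle B(x), x_n - x\rangle \to 0$ uses precisely weak convergence $x_n \rightharpoonup x$. Everything else is a direct chain of inequalities.
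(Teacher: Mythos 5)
Your argument is correct and is essentially the paper's own proof: both isolate $\langle A(x_n),x_n-x\rangle$ by subtracting the monotone term, use $\langle B(x_n)-B(x),x_n-x\rangle\ge 0$ together with $\langle B(x),x_n-x\rangle\to 0$ (from weak convergence), and then invoke the $(\mathrm{S})_+$ property of $A$. The only cosmetic difference is that you phrase the final estimate via $\limsup(a_n+b_n)\ge\limsup a_n+\liminf b_n$, while the paper bounds the terms pointwise in $n$ before passing to the $\limsup$; the content is identical.
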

\begin{proof}
Suppose $x_n\rightharpoonup x$ in $X$ and 
\begin{equation}\label{Splus}
\limsup_{n\to+\infty}\langle A(x_n)+B(x_n),x_n-x\rangle\le 0.   
\end{equation}
The monotonicity of $B$ entails
\begin{equation*}
\begin{split}
\langle A(x_n), & x_n-x\rangle=\langle A(x_n)+B(x_n),x_n-x\rangle-\langle B(x_n),
x_n-x\rangle\\
& =\langle A(x_n)+B(x_n),x_n-x\rangle-\langle B(x_n)-B(x),x_n-x\rangle-\langle B(x),
x_n-x\rangle\\
&\leq \langle A(x_n)+B(x_n),x_n-x\rangle-\langle B(x),x_n-x\rangle\quad\forall\, n\in\N.\\
\end{split}    
\end{equation*}
Using \eqref{Splus} we thus get
$$\limsup_{n\to+\infty}\langle A(x_n),x_n-x\rangle\leq 0,$$
whence $x_n\to x$ because $A$ is of type $(\mathrm{S})_+$. 
\end{proof}
Finally, if $X$ and $Y$ are two topological spaces then $X\hookrightarrow Y$ means that $X$ continuously embeds in $Y$.

Hereafter, $\Omega$ is a bounded domain of the real Euclidean $N$-space $(\mathbb{R}^N, |\cdot|)$, $N\geq 2$, with a $C^2$-boundary $\partial\Omega$, $|E|$ indicates the $N$-dimensional Lebesgue measure of $E\subseteq \mathbb{R}^{N}$, 
$$t_\pm:=\max\{\pm t,0\},\quad t\in\R,$$
while $C$, $C_1$, etc. are positive constants, which may change value from line to line, whose dependencies will be specified when necessary. Denote by $d:\overline{\Omega}\to \R^+_0$ the distance function of $\Omega$, i.e., 
$$d(x):={\rm dist}(x,\partial\Omega)\quad\forall\, x\in\overline{\Omega}.$$
It enjoys a useful summability property (see \cite[Proposition 2.1]{GM}), namely
\begin{prop}\label{distsumm}
If $0<\sigma<1<q<\frac{1}{\sigma}$ then $d^{-\sigma}\in L^q(\Omega)$.
\end{prop}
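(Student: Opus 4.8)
The plan is to reduce the statement to the integrability of a one–dimensional subcritical power singularity, exploiting the regularity of $\partial\Omega$ only through the behaviour of $d$ in a tubular neighbourhood. Set $\beta:=\sigma q$; the hypothesis $q<1/\sigma$ is exactly $\beta\in(0,1)$, and $d^{-\sigma}\in L^q(\Omega)$ means $\int_\Omega d(x)^{-\beta}\dx<\infty$. Since $\partial\Omega$ is $C^2$ and $\Omega$ is bounded, there is $\delta_0>0$ such that $d$ is $C^2$ on the collar $U_{\delta_0}:=\{x\in\overline\Omega:\ d(x)<\delta_0\}$ with $|\nabla d|\equiv1$ there; hence each level set $\{d=t\}$, $t\in(0,\delta_0)$, is a compact $C^1$ hypersurface and $t\mapsto\mathcal{H}^{N-1}(\{d=t\})$ is continuous on $[0,\delta_0]$, so that $M:=\sup_{t\in[0,\delta_0]}\mathcal{H}^{N-1}(\{d=t\})<\infty$.

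First I would split $\Omega=U_{\delta_0}\cup(\Omega\setminus U_{\delta_0})$. On $\Omega\setminus U_{\delta_0}$ one has $d\ge\delta_0$, whence $\int_{\Omega\setminus U_{\delta_0}}d^{-\beta}\dx\le\delta_0^{-\beta}|\Omega|<\infty$. On $U_{\delta_0}$, the coarea formula together with $|\nabla d|\equiv1$ yields
$$\int_{U_{\delta_0}}d(x)^{-\beta}\dx=\int_0^{\delta_0}t^{-\beta}\,\mathcal{H}^{N-1}(\{d=t\})\dt\le M\int_0^{\delta_0}t^{-\beta}\dt=\frac{M\,\delta_0^{1-\beta}}{1-\beta}<\infty,$$
the last integral being finite precisely because $\beta<1$. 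Adding the two contributions gives $\int_\Omega d^{-\beta}\dx<\infty$, i.e. $d^{-\sigma}\in L^q(\Omega)$.

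An equivalent route, requiring less boundary regularity, uses the layer–cake formula: writing $\omega(t):=|\{x\in\Omega:\ d(x)<t\}|$ and noting $d(x)^{-\beta}>\lambda\iff d(x)<\lambda^{-1/\beta}$, one gets $\int_\Omega d^{-\beta}\dx=\int_0^\infty\omega(\lambda^{-1/\beta})\,{\rm d}\lambda$, which after splitting at $\lambda_0:=\delta_0^{-\beta}$ (bounding $\omega\le|\Omega|$ for $\lambda\le\lambda_0$ and using the Minkowski–content estimate $\omega(t)\le Mt$, $t<\delta_0$, for $\lambda>\lambda_0$) converges iff $1/\beta>1$, again the assumption $\beta<1$. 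The only point needing genuine justification in either argument is the control of the collar — the bound $\mathcal{H}^{N-1}(\{d=t\})\le M$, respectively $\omega(t)\le Mt$ for small $t$ — and this is a classical consequence of the $C^2$ (in fact already Lipschitz) regularity of $\partial\Omega$; once it is in hand, the rest is just the elementary estimate of a power integral. Hence one may also simply invoke \cite[Proposition 2.1]{GM}.
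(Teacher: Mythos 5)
Your argument is correct. Note, however, that the paper itself does not prove this proposition: it is quoted verbatim from \cite[Proposition 2.1]{GM} and used as a black box, so there is no internal proof to compare against. What you supply is the standard self-contained justification: the reduction to $\int_\Omega d^{-\beta}\dx<\infty$ with $\beta=\sigma q\in(0,1)$ is exactly right (the hypothesis $q<1/\sigma$ is precisely $\beta<1$), the splitting into a boundary collar and its complement is the natural decomposition, and the coarea formula with $|\nabla d|\equiv 1$ together with the uniform bound on $\mathcal{H}^{N-1}(\{d=t\})$ (or, in your second route, the Minkowski-content bound $\omega(t)\le Mt$) reduces everything to the convergence of $\int_0^{\delta_0}t^{-\beta}\dt$. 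The only step that is not purely elementary is the collar estimate, and you correctly isolate it and attribute it to the regularity of $\partial\Omega$; for a $C^2$ (or $C^{1,1}$) boundary one can make it explicit by parametrizing $\{d=t\}$ as the image of $\partial\Omega$ under $x\mapsto x-t\nu(x)$, whose Jacobian is uniformly bounded for small $t$. Either of your two routes would serve as a complete proof; the layer-cake version has the advantage of needing only the measure bound $\omega(t)\le Mt$, which already holds for Lipschitz boundaries, so it proves slightly more than the statement requires.
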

Let $X(\Omega)$ be a real-valued function space on $\Omega$ and let $u,v\in X(\Omega)$. We simply write $u\leq v$ when $u(x)\leq v(x)$ a.e. in $\Omega$. Analogously for $u<v$, etc. To shorten notation, define
\begin{equation*}
\Omega (u\leq v):=\{x\in \Omega :u(x)\leq v(x)\},\quad X(\Omega)_+:=\{w\in X(\Omega): w>0\}.
\end{equation*}
Henceforth, $p'$ indicates the conjugate exponent of $p\geq 1$, the Sobolev space $W^{1,p}_0(\Omega)$ is equipped with Poincaré's norm 
\begin{equation*}
\Vert u\Vert_{1,p}:=\Vert |\nabla u|\Vert_p,\quad u\in W^{1,p}_0(\Omega),
\end{equation*}
where, as usual,
\begin{equation*}
\Vert v\Vert_q:=\left\{ 
\begin{array}{ll}
\left(\int_{\Omega }|v(x)|^q\dx\right)^{1/q} & \text{ if }1\leq q<+\infty, \\ 
\phantom{} &  \\ 
\underset{x\in\Omega}{\esssup}\, |v(x)| & \text{ when } q=+\infty,
\end{array}
\right.
\end{equation*}
and, given any $u\in W^{1,p}_0(\Omega)$, we set $u:=0$ a.e. in $\R^N\setminus\Omega$; cf. \cite[Section 5]{DNPV}. Moreover, $W^{-1,p'}(\Omega):=(W^{1,p}_0(\Omega))^*$ while $p^*$ is the Sobolev critical exponent for the embedding $W^{1,p}_0(\Omega) \hookrightarrow L^q(\Omega)$. It is known that $p^*=\frac{Np}{N-p}$ once $p<N$. 

Fix $s\in(0,1)$. The Gagliardo semi-norm of a measurable function $u:\R^N\to\R$ is
\begin{equation*}
[u]_{s,p}:=\left(\int_{R^N\times\R^N}\frac{|u(x)-u(y)|^p}{|x-y|^{N+ps}}{\dx}{\dy} \right)^{1/p}.
\end{equation*}
$W^{s,p}(\R^N)$ denotes the fractional Sobolev space
\begin{equation*}
W^{s,p}(\R^N):= \left\{u\in L^p(\R^N):\ [u]_{s,p}<+\infty \right\}
\end{equation*}
endowed with the norm
\begin{equation*}
\|u\|_{W^{s,p}(\mathbb{R}^N)}:=(\|u\|^p_{L^p(\mathbb{R}^N)}+[u]_{s,p}^p)^{1/p}.
\end{equation*}
On the space
\begin{equation*}
W^{s,p}_0(\Omega):=\{u\in W^{s,p}(\R^N):u=0\;\;\mbox{a.e. in}\;\;\R^N\setminus\Omega\}
\end{equation*}
we will consider the equivalent norm
$$\| u\|_{s,p}:=[u]_{s,p},\quad u\in W^{s,p}_0(\Omega).$$
As before, $W^{-s,p'}(\Omega):=(W^{s,p}_0(\Omega))^*$ and $p^*_s$ indicates the fractional Sobolev critical exponent, i.e., $p^*_s=\frac{Np}{N-sp}$ when $sp<N$, $p^*_s= +\infty$ otherwise. Thanks to Propositions 2.1--2.2, Theorem 6.7, and Corollary 7.2 of \cite{DNPV} one has
\begin{prop}\label{fracemb}
If $1\leq p<+\infty$ then:
\begin{itemize}
\item[{\rm (a)}] $0<s'\le s''\le 1\;\implies\; W^{s'',p}_0(\Omega)\hookrightarrow W^{s',p}_0(\Omega)$. 
\item[{\rm (b)}] $W^{s,p}_0(\Omega)\hookrightarrow L^q(\Omega)$ for all $q\in [1, p^*_s]$.
\item[{\rm (c)}] The embedding in {\rm (b)} is compact once $q<p^*_s<+\infty$.
\end{itemize}
\end{prop}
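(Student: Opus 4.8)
The three assertions are, in substance, classical facts about fractional Sobolev spaces collected in \cite{DNPV}; the plan is to indicate how each follows, the only genuinely new ingredient being the passage between the full norm $\|\cdot\|_{W^{s,p}(\R^N)}$ and the equivalent seminorm $\|\cdot\|_{s,p}$ on the zero-trace space, which is handled throughout by the fractional Poincaré inequality, valid because $\Omega$ is bounded.

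For (a) I would bound the Gagliardo seminorm directly. Given $u\in W^{s'',p}_0(\Omega)$, extended by zero outside $\Omega$, split $\R^N\times\R^N$ into $\{|x-y|<1\}$ and its complement. On the first region, $ps'\le ps''$ and $|x-y|<1$ force $|x-y|^{-(N+ps')}\le|x-y|^{-(N+ps'')}$, so that piece of $[u]_{s',p}^p$ is dominated by $[u]_{s'',p}^p$; on the complement one uses $|u(x)-u(y)|^p\le 2^{p-1}(|u(x)|^p+|u(y)|^p)$ together with the convergent integral $\int_{|z|\ge1}|z|^{-(N+ps')}\dz$ to bound that piece by $C\|u\|_{L^p(\R^N)}^p$. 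This gives $\|u\|_{W^{s',p}(\R^N)}\le C\|u\|_{W^{s'',p}(\R^N)}$, and the fractional Poincaré inequality on $\Omega$ then lets one replace both full norms by the corresponding seminorms; the condition $u=0$ a.e. in $\R^N\setminus\Omega$ is trivially preserved, so the continuous embedding follows.

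For (b) the plan is to view $W^{s,p}_0(\Omega)$, via extension by zero, as a closed subspace of $W^{s,p}(\R^N)$ with $\|u\|_{W^{s,p}(\R^N)}\le C\|u\|_{s,p}$ (Poincaré again). When $sp<N$ — the only case relevant to \eqref{prob}, since $sp\le s_1p<N$ — Theorem~6.7 of \cite{DNPV} yields $W^{s,p}(\R^N)\hookrightarrow L^{p^*_s}(\R^N)$, and restricting to $\Omega$ and using $|\Omega|<+\infty$ with Hölder's inequality extends this to every $q\in[1,p^*_s]$; the ranges $sp\ge N$ are handled by the analogous statements in \cite{DNPV}. Finally, (c) is the Rellich--Kondrachov-type result: by Corollary~7.2 of \cite{DNPV} the embedding $W^{s,p}(\Omega)\hookrightarrow L^q(\Omega)$ is compact for $q\in[1,p^*_s)$ on the bounded domain $\Omega$, and restricting a compact operator to the closed subspace $W^{s,p}_0(\Omega)$ keeps it compact.

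I do not anticipate a real obstacle here: this is a bookkeeping argument assembling \cite[Propositions~2.1--2.2, Theorem~6.7, Corollary~7.2]{DNPV}. The only mildly delicate points are the consistent use of the norm equivalence through the fractional Poincaré inequality, and — were one to insist on full generality in (b) — the borderline case $sp=N$; neither matters for the present paper, where $sp<N$ always holds.
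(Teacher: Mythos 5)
Your proposal is correct and follows exactly the route the paper takes: the paper offers no proof beyond invoking Propositions 2.1--2.2, Theorem 6.7, and Corollary 7.2 of \cite{DNPV}, and your argument simply unpacks those citations (the kernel-splitting estimate for (a) is precisely the proof of \cite[Proposition 2.1]{DNPV}, and the Poincar\'e-based passage between $\|\cdot\|_{W^{s,p}(\R^N)}$ and $[\,\cdot\,]_{s,p}$ is the norm equivalence the paper asserts when defining $\|\cdot\|_{s,p}$). No discrepancy to report.
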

However, contrary to the non-fractional case, 
$$1\leq q<p\leq+\infty\;\;\centernot\implies W^{s,p}_0(\Omega) \subseteq W^{s,q}_0(\Omega);$$
cf. \cite{MS}. Define, for every $u,v\in W^{s,p}_0(\Omega)$,
\begin{equation*}
\langle(-\Delta)^s_p u,v\rangle:=
\int_{\R^N\times\R^N}\frac{|u(x)-u(y)|^{p-2}(u(x)-u(y))(v(x)-v(y))}{|x-y|^{N+sp}}\dx\dy\, .
\end{equation*}
The operator $(-\Delta)_p^s$ is called (negative) $s$-fractional $p$-Laplacian. It possesses the following properties.
\begin{itemize}
\item[$({\rm p}_1)$] $(-\Delta)^s_p:W^{s,p}_0(\Omega)\rightarrow W^{-s,p'}(\Omega)$ is monotone, continuous, and of type $({\rm S})_+$; vide \cite[Lemma 2.1]{FI}.
\item[$({\rm p}_2)$] $(-\Delta)^s_p$ maps bounded sets into bounded sets. In fact,
$$\Vert(-\Delta)^s_p u\Vert_{W^{-s,p'}(\Omega)}\leq\Vert u\Vert_{s,p}^{p-1} 
\quad\forall\, u\in W^{s,p}_0(\Omega).$$
\end{itemize}
To deal with distributional fractional gradients, we first introduce the Bessel potential spaces $L^{\alpha,p}(\R^N)$, where $\alpha>0$. Set, for every $x\in\R^N$,
\begin{equation*}
g_\alpha(x):= \frac{1}{(4 \pi)^{\alpha/2}\Gamma (\alpha/2)} \int_{0}^{+\infty}e^{\frac{-\pi|x|^2}{\delta}} e^{\frac{-\delta}{4\pi}}\delta^{\frac{\alpha-N}{2}}\frac{{\rm d}\delta}{\delta}\, .  
\end{equation*}
On account of \cite[Section 7.1]{MI} one can assert that:
\begin{itemize}
\item[1)] $g_\alpha\in L^1(\R^N)$ and $\|g_\alpha\|_{L^1(\R^N)}=1$.
\item[2)] $g_\alpha$ enjoys the semigroup property, i.e., $g_\alpha \ast g_\beta= g_{\alpha+\beta}$ for any $\alpha,\beta>0$.
\end{itemize}
Now, put
\begin{equation*}
L^{\alpha,p}(\R^N):=\{u:\, u= g_\alpha\ast\tilde u\;\mbox{for some}\;\tilde u\in L^p(\R^N)\}
\end{equation*}
as well as
$$\|u\|_{L^{\alpha,p}(\R^N)}= \|\tilde u\|_{ L^p(\R^N)}\;\;\mbox{whenever}\;\; u=g_\alpha\ast\tilde u.$$ 
Using 1)--2) easily yields
$$0<\alpha<\beta\;\implies\; L^{\beta,p}(\R^N)\subseteq L^{\alpha,p}(\R^N)\subseteq L^p(\R^N).$$
Moreover (see \cite[Theorem 2.2]{SS1}),
%
%
\begin{thm}\label{besselspace} 
If $1<p<+\infty$ and $0<\eps<\alpha$ then
\begin{equation*}
L^{\alpha+\eps,p}(\R^N)\hookrightarrow W^{\alpha,p}(\R^N)\hookrightarrow L^{\alpha-\eps,p}(\R^N).
\end{equation*}
\end{thm}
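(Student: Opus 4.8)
The statement to prove is Theorem~\ref{besselspace}: for $1<p<+\infty$ and $0<\eps<\alpha$, one has the continuous embeddings $L^{\alpha+\eps,p}(\R^N)\hookrightarrow W^{\alpha,p}(\R^N)\hookrightarrow L^{\alpha-\eps,p}(\R^N)$.

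The plan is to work entirely on the Fourier side, since Bessel potential spaces are most naturally characterized by the multiplier $(1+4\pi^2|\xi|^2)^{\alpha/2}$: indeed $u\in L^{\alpha,p}(\R^N)$ if and only if $(\mathrm{I}-\Delta)^{\alpha/2}u\in L^p(\R^N)$, with $\|u\|_{L^{\alpha,p}}=\|(\mathrm{I}-\Delta)^{\alpha/2}u\|_{L^p}$, because $g_\alpha$ is exactly the kernel whose Fourier transform is $(1+4\pi^2|\xi|^2)^{-\alpha/2}$. First I would recall (or cite from \cite{MI} or \cite{SS1}) this Fourier/Bessel-potential description together with the scale of spaces $L^{\alpha,p}$ being nested and monotone in $\alpha$, which the excerpt has already noted. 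The second ingredient is the classical fact, due essentially to the Mihlin--H\"ormander multiplier theorem, that the operators $(\mathrm{I}-\Delta)^{\alpha/2}(\mathrm{I}-\Delta)^{-\beta/2}$ for $\beta>\alpha$ — equivalently, the Bessel kernel $g_{\beta-\alpha}$ convolution followed by differentiation — are bounded on $L^p(\R^N)$ for $1<p<\infty$. Combined, these give for $\alpha_1<\alpha_2$ the continuous inclusion $L^{\alpha_2,p}\hookrightarrow L^{\alpha_1,p}$ with a quantitative norm bound; this alone does not yet reach $W^{\alpha,p}$, which is defined via the Gagliardo seminorm.

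The bridge to the Gagliardo (Sobolev--Slobodeckij) space $W^{\alpha,p}$ is the key point: one must compare the fractional Sobolev space defined through the double integral $[u]_{\alpha,p}$ with the Bessel potential space. The standard result is that $W^{\alpha,p}(\R^N)=F^\alpha_{p,p}(\R^N)$ (a Triebel--Lizorkin space) while $L^{\alpha,p}(\R^N)=F^\alpha_{p,2}(\R^N)$, and since for $1<p<\infty$ one has $F^\alpha_{p,\min(p,2)}\hookrightarrow F^\alpha_{p,2}\hookrightarrow F^\alpha_{p,\max(p,2)}$ and, crucially, $F^\alpha_{p,q_0}\hookrightarrow B^\alpha_{p,q_1}\hookrightarrow F^\alpha_{p,q_2}$ for suitable indices, one gets the ``lossy'' two-sided embedding: $W^{\alpha+\eps,p}\hookrightarrow B^{\alpha+\eps/2}_{p,1}\hookrightarrow L^{\alpha,p}$ type chains. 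Concretely, I would invoke the elementary embeddings between Besov and Bessel-potential (Triebel--Lizorkin) spaces of different smoothness: for any $\theta>0$, $B^{\alpha+\theta}_{p,\infty}(\R^N)\hookrightarrow L^{\alpha,p}(\R^N)\hookrightarrow B^{\alpha-\theta}_{p,\infty}(\R^N)$ and likewise with $W^{\alpha,p}=B^{\alpha}_{p,p}$ sandwiched, so that a loss of an arbitrarily small amount $\eps$ of derivatives absorbs the mismatch in the second (microlocal) index. Applying this with $\theta$ chosen relative to $\eps$ yields both desired inclusions. The simplest self-contained route, if one wants to avoid Triebel--Lizorkin theory, is to cite directly: the left embedding $L^{\alpha+\eps,p}\hookrightarrow W^{\alpha,p}$ follows from $L^{\alpha+\eps,p}\hookrightarrow B^{\alpha+\eps/2}_{p,1}\hookrightarrow B^{\alpha}_{p,p}=W^{\alpha,p}$, and the right embedding $W^{\alpha,p}\hookrightarrow L^{\alpha-\eps,p}$ from $W^{\alpha,p}=B^\alpha_{p,p}\hookrightarrow B^{\alpha-\eps/2}_{p,\infty}\hookrightarrow L^{\alpha-\eps,p}$, each inclusion being a standard Besov-embedding fact (Jawerth--Franke type or the elementary $B^{s}_{p,q_0}\hookrightarrow B^{s'}_{p,q_1}$ for $s>s'$, any $q_0,q_1$).

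The main obstacle is precisely the comparison between the Gagliardo-seminorm definition of $W^{\alpha,p}$ and the Fourier-multiplier definition of $L^{\alpha,p}$: for $p\ne 2$ these two scales genuinely differ (they coincide only at $p=2$, where both equal $H^\alpha$), and the content of the theorem is exactly that the difference is ``sub-derivative order,'' i.e., killed by any $\eps>0$ loss. Everything else — nestedness of the $L^{\alpha,p}$ scale, boundedness of Bessel-type multipliers on $L^p$, the identification $W^{\alpha,p}=B^\alpha_{p,p}$ for non-integer $\alpha$ — is classical and can be quoted from standard references (Stein, Triebel, Adams--Hedberg), so in the write-up I would state the needed embeddings as known facts with precise citations and spend the bulk of the argument on assembling the chain above; Shieh--Spector \cite{SS1} in fact prove this as their Theorem~2.2, so at the level of this paper it suffices to record the statement and point to that proof.
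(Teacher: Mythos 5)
The paper offers no proof of this statement at all: it is recorded as a known result with the citation ``see \cite[Theorem 2.2]{SS1}'', which is exactly the conclusion you reach in your final sentence. Your sketch via the identifications $W^{\alpha,p}=B^\alpha_{p,p}$, $L^{\alpha,p}=F^\alpha_{p,2}$ and the lossy Besov embeddings $B^{s}_{p,q_0}\hookrightarrow B^{s'}_{p,q_1}$ for $s>s'$ is the standard (and correct) argument underlying that citation, so your proposal is consistent with the paper's treatment and adds a faithful outline of the proof it delegates to the literature.
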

%
%
Finally, define
\begin{equation*}
 L^{s,p}_0(\Omega):=\{u\in L^{s,p}(\R^N): u=0\;\text{in}\;\R^N\setminus\Omega \}.
\end{equation*}
Thanks to Theorem \ref{besselspace} we clearly have
\begin{equation}\label{comparisonbound}
L^{s+\eps,p}_0(\Omega)\hookrightarrow W^{s,p}_0(\Omega)
\hookrightarrow L^{s-\eps,p}_0(\Omega)\quad\forall\,\eps\in (0,s).
\end{equation}
The next basic notion is taken from \cite{SS1}. For $0<\alpha<N$, let
\begin{equation*}
\gamma(N,\alpha):=\frac{\Gamma((N-\alpha)/2)}{\pi^{N/2}2^\alpha\Gamma(\alpha/2)},
\quad I_\alpha(x):=\frac{\gamma(N,\alpha)}{|x|^{N-\alpha}},\quad x\in\R^N\setminus\{0\}.
\end{equation*}
If $u\in L^p(\R^N)$ and $I_{1-s}\ast u$ makes sense then the vector
$$D^s u:=\left(\frac{\partial}{\partial x_1}(I_{1-s}\ast u),\ldots,
\frac{\partial}{\partial x_N}(I_{1-s}\ast u)\right),$$
where partial derivatives are understood in a distributional sense, is called distributional Riesz $s$-fractional gradient of $u$. Theorem 1.2 in \cite{SS1} ensures that
$$D^s u=I_{1-s}\ast Du\quad\forall\, u\in C^\infty_c(\R^N).$$
Further, $D^s u$ looks like the natural extension of $\nabla u$ to the fractional framework; cf., e.g., \cite{GMM2} for details. According to \cite[Definition 1.5]{SS1}, $X^{s,p}(\R^N)$ denotes the completion of $C^\infty_c(\R^N)$ with respect to the norm
$$\|u\|_{X^{s,p}(\R^N)}:=(\|u\|_{L^p(\R^N)}^p+\|D^s u\|_{L^p(\R^N)}^p)^{1/p}.$$
Since, by \cite[Theorem 1.7]{SS1}, $X^{s,p}(\R^N)=L^{s,p}(\R^N)$ we can deduce
many facts about $X^{s,p}(\R^N)$ from the existing literature on $L^{s,p}(\R^N)$. In particular, if
\begin{equation*}
X^{s,p}_{0}(\Omega):=\{u \in X^{s,p}(\R^N):u=0\;\text{in}\;\R^N\setminus\Omega\}.    
\end{equation*}
then $X^{s,p}_{0}(\Omega)=L^{s,p}_{0}(\Omega)$.  
\section{freezing the convection term}
To address the two troubles (singularity and convection) separately, here, we will study an auxiliary equation
%
%
patterned after that of \eqref{prob}, but with $D^s u$ replaced by $D^s v$ for fixed $v\in W^{s_1,p}_0(\Omega)$.
\begin{lemma}\label{torsionproblem}
Under hypothesis \eqref{hypf}, the problem 
\begin{equation}\label{probf}
\tag{${\rm P}_f$}
\left\{
\begin{alignedat}{2}
(-\Delta)^{s_1}_{p} u+(-\Delta)^{s_2}_{q}u & =f(x,u) &&\quad\mbox{in}\;\;\Omega,\\
u & =0 &&\quad \mbox{in}\;\;\R^N\setminus\Omega,
\end{alignedat}
\right.
\end{equation}
possesses a positive sub-solution $\underline{u}\in W^{s_1,p}_0(\Omega)\cap C^{0,\tau}(\overline{\Omega})$, where $\tau\in (0, s_1)$.
\end{lemma}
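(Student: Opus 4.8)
The plan is to take as $\underline u$ the unique weak solution of the auxiliary Dirichlet problem with a small constant datum, namely
\begin{equation*}
(-\Delta)^{s_1}_p u+(-\Delta)^{s_2}_q u=c\quad\mbox{in}\;\;\Omega,\qquad u=0\quad\mbox{in}\;\;\R^N\setminus\Omega ,
\end{equation*}
for a suitably small $c>0$. The reason for freezing the right-hand side to a constant is that, although $\mathcal A:=(-\Delta)^{s_1}_p+(-\Delta)^{s_2}_q$ is non-homogeneous, the associated functional acts on every nonnegative test function $\phi$ simply as $c\int_\Omega\phi\dx$, so the mismatch between the two homogeneities never produces a sign difficulty; the only thing one must ensure is that $c$ is small enough for $\underline u$ to remain in the range where $f\geq L/2$.

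In the functional framework of \cite{G}, and using $(\mathrm{p}_1)$ together with Proposition \ref{sumop}, the operator $\mathcal A:W^{s_1,p}_0(\Omega)\to W^{-s_1,p'}(\Omega)$ is continuous, strictly monotone and coercive (indeed $\langle\mathcal A u,u\rangle\geq\|u\|_{s_1,p}^p$), hence a homeomorphism onto its dual by the Minty--Browder theorem; since $\phi\mapsto c\int_\Omega\phi\dx$ belongs to $W^{-s_1,p'}(\Omega)$ by Proposition \ref{fracemb}(b), there exists a unique $\underline u_c\in W^{s_1,p}_0(\Omega)$ solving the problem above. Testing with $(\underline u_c)_-$ and invoking monotonicity yields $\underline u_c\geq 0$; as $\underline u_c\neq 0$, the strong maximum principle and the (Hopf-type) boundary estimates for the fractional $(p,q)$-Laplacian from \cite{G} give $c_0>0$ with $\underline u_c\geq c_0\, d^{s_1}$ in $\Omega$, so in particular $\underline u_c>0$ a.e. in $\Omega$. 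Since the datum is bounded, De Giorgi--Moser iteration provides $\underline u_c\in L^\infty(\Omega)$, and then the global Hölder regularity theory (cf. \cite{IMS,IMS2,G}) gives $\underline u_c\in C^{0,\tau}(\overline\Omega)$ for some $\tau\in(0,s_1)$.

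The quantitative heart of the argument is the smallness of $\underline u_c$: choosing $\phi=\underline u_c$ above and using Proposition \ref{fracemb}(b) and Hölder's inequality one gets $\|\underline u_c\|_{s_1,p}^{p-1}\leq C\,c$, which, combined with the $L^\infty$ a priori bound, forces $\|\underline u_c\|_\infty\leq C'c^{1/(p-1)}\to 0$ as $c\to 0^+$. By the first line of \eqref{hypf} there is $\delta>0$ such that $f(x,t)\geq L/2$ for all $x\in\Omega$ and $0<t\leq\delta$; I then fix $c\in(0,L/2]$ so small that $\|\underline u_c\|_\infty\leq\delta$ and set $\underline u:=\underline u_c$. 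Now for every $\phi\in W^{s_1,p}_0(\Omega)$ with $\phi\geq 0$, since $0<\underline u(x)\leq\delta$ a.e. in $\Omega$ one has $f(x,\underline u(x))\geq L/2\geq c$, whence
\begin{equation*}
\langle(-\Delta)^{s_1}_p\underline u,\phi\rangle+\langle(-\Delta)^{s_2}_q\underline u,\phi\rangle=c\int_\Omega\phi\dx\leq\int_\Omega f(\cdot,\underline u)\phi\dx ,
\end{equation*}
the last integral being finite because, by the second line of \eqref{hypf} and $\underline u\geq c_0 d^{s_1}$, one has $f(\cdot,\underline u)\leq C(1+d^{-\gamma s_1})$ with $\gamma s_1<1$, and $d^{-\gamma s_1}\phi\in L^1(\Omega)$ follows from the fractional Hardy inequality (here the standing assumption $s_1p>1$ is used, writing $d^{-\gamma s_1}\phi=d^{(1-\gamma)s_1}\cdot d^{-s_1}\phi$ and applying Hölder). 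Thus $\underline u$ is a positive sub-solution of \eqref{probf} lying in $W^{s_1,p}_0(\Omega)\cap C^{0,\tau}(\overline\Omega)$.

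I expect the genuine obstacle to lie in the two regularity inputs quoted for the non-homogeneous fractional $(p,q)$-Laplacian with bounded datum: the global boundedness estimate $\|\underline u_c\|_\infty\leq C'c^{1/(p-1)}$ (which is precisely what delivers the smallness needed to enter the region where $f\geq L/2$) and the $C^{0,\tau}(\overline\Omega)$ regularity together with the lower bound $\underline u_c\geq c_0 d^{s_1}$. These require adapting the results of \cite{IMS,IMS2,G}; the $(-\Delta)^{s_2}_q$ term is, however, harmless in the De Giorgi iteration, as it contributes with the favourable sign when tested against truncations of $\underline u_c$.
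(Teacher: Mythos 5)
Your proposal is correct and follows essentially the same route as the paper: both take as $\underline{u}$ the solution of the torsion problem with a small constant datum $\sigma$ and use the first line of \eqref{hypf} to guarantee $\sigma<f(\cdot,\underline{u})$ once $\|\underline{u}\|_\infty$ is small enough. The ingredients you flag as the ``genuine obstacles'' (existence, positivity, $C^{0,\tau}(\overline{\Omega})$ regularity, and the smallness $\|u_\sigma\|_\infty\to 0$ as $\sigma\to 0^+$) are precisely what the paper imports wholesale from Theorem 3.15 of \cite{G}, rather than re-deriving them via Minty--Browder and De Giorgi--Moser as you sketch.
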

\begin{proof}
Thanks to \eqref{hypf}, for every $\eps\in (0,L)$ there exists $\delta\in (0,1)$ such that 
$$f(x, t)>\eps\quad\forall\, (x, t)\in\Omega\times (0, \delta).$$
Let $\sigma>0$. Theorem 3.15 of \cite{G} provides a positive solution $u_\sigma \in W^{s_1,p}_{0}(\Omega)\cap C^{0,\tau}(\overline{\Omega})$, $\tau\in (0, s_1)$, of the torsion problem
\begin{equation*}
\left\{
\begin{alignedat}{2}
(-\Delta)^{s_1}_p u+(-\Delta)^{s_2}_q u & =\sigma &&\quad\mbox{in}\;\;\Omega,\\
u & =0 &&\quad \mbox{in}\;\;\R^N\setminus\Omega.
\end{alignedat}
\right.
\end{equation*}
Moreover, $u_\sigma\to 0$ in $C^{0,\tau}(\overline{\Omega})$ as $\sigma\to 0^+$.
Thus, for any $\sigma$ sufficiently small one has both $\sigma<\ep$ and $\|u_\sigma\|_{\infty}<\delta$. This evidently implies
$$(-\Delta)^{s_1}_p u_\sigma+(-\Delta)^{s_2}_q u_\sigma=\sigma< \eps<
f(\cdot,u_\sigma),$$ 
i.e., $\underline{u}:=u_\sigma$ is a positive sub-solution of \eqref{probf}.
\end{proof}
\begin{rmk}\label{usin}
If $s_1\not= q's_2$ then Hopf's theorem \cite[Proposition 2.12]{G} ensures that
\begin{equation}\label{hopf}
\eta d(x)^{s_1}\leq\underline{u}(x)\quad\forall\, x\in\Omega, 
\end{equation}
with suitable $\eta >0$. Otherwise, $\eta d^{\alpha}\leq\underline{u}$, being $\alpha>s_1$, $\alpha\not= q's_2$, and $\alpha\not=p's_1$; cf. \cite[ Remark 2.14]{G}. 
\end{rmk}
Now, fixed any $v\in W^{s_1,p}_{0}(\Omega)$, consider the following problem, where the convective term has been frozen:
\begin{equation}\label{auxprob}
\tag{${\rm {P_{v}}}$}
\left\{
\begin{alignedat}{2}
(-\Delta)^{s_1}_{p} u+(-\Delta)^{s_2}_{q}u & = f (x,u)+g(x, D^s v) && \quad\mbox{in}\;\;\Omega,\\
u & =0 &&\quad \mbox{on}\;\;\R^N\setminus\Omega.
\end{alignedat}
\right.
\end{equation}
\begin{thm}\label{existence}
Let \eqref{hypf} and \eqref{hypg} be satisfied. If $v\in W^{s_1,p}_{0}(\Omega)$ then \eqref{auxprob} admits a weak solution $u_v \in W^{s_1,p}_0(\Omega)\cap C^{0,\tau}(\overline{\Omega})$, where $\tau\in (0,s_1)$. Moreover, $u_v\ge \underline{u}$.
\end{thm}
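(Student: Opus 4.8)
The plan is to solve the frozen problem \eqref{auxprob} by a sub-super solution scheme combined with a truncation argument and variational minimization, then bootstrap to regularity. Since $v\in W^{s_1,p}_0(\Omega)$ is fixed and $g$ satisfies \eqref{hypg}, the term $h(x):=g(x,D^sv)=c_3(1+|D^sv|^\zeta)$ is a fixed nonnegative function; using $W^{s_1,p}_0(\Omega)\hookrightarrow X^{s_1,p}_0(\Omega)\hookrightarrow X^{s,p}_0(\Omega)$ (from \eqref{comparisonbound} together with monotonicity of the Bessel spaces in the smoothness parameter) and $W^{s,p}_0(\Omega)=L^{s,p}_0(\Omega)=X^{s,p}_0(\Omega)$, we have $D^sv\in L^p(\R^N)$, hence $h\in L^{p/\zeta}(\Omega)$ with $p/\zeta>1$; in fact since $\zeta<p-1$ one has $p/\zeta$ comfortably larger than $1$, so $h$ lies in a good Lebesgue space for testing against $W^{s_1,p}_0(\Omega)$-functions.

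\medskip

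First I would take the sub-solution $\underline u\in W^{s_1,p}_0(\Omega)\cap C^{0,\tau}(\overline\Omega)$ produced by Lemma \ref{torsionproblem}, which satisfies the lower bound $\eta d^{s_1}\le\underline u$ (or $\eta d^\alpha\le\underline u$) from Remark \ref{usin}; note it is also a sub-solution of \eqref{auxprob} because $g\ge0$. Next I would construct a super-solution $\overline u$: fix $M>0$ large and let $\overline u$ solve the torsion-type problem
\begin{equation*}
(-\Delta)^{s_1}_p w+(-\Delta)^{s_2}_q w = c_1\underline u^{-\gamma}+c_2 M^r+h(x)\quad\text{in }\Omega,\qquad w=0\text{ in }\R^N\setminus\Omega,
\end{equation*}
whose right-hand side belongs to a suitable dual space since $\underline u^{-\gamma}\in L^{\infty}_{\rm loc}$ behaves like $d^{-\gamma s_1}$ with $\gamma s_1<s_1<1$, so by Proposition \ref{distsumm} it lies in $L^\nu(\Omega)$ for $\nu$ close to $1/(\gamma s_1)$, hence in $W^{-s_1,p'}(\Omega)$ after checking the exponent is admissible; existence and $C^{0,\tau}(\overline\Omega)$-regularity of $\overline u$ follow from the results of \cite{G}. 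By weak comparison (using monotonicity of $(-\Delta)^{s_1}_p+(-\Delta)^{s_2}_q$, property $({\rm p}_1)$) one gets $\underline u\le\overline u$ after choosing $M\ge\|\overline u\|_\infty$ consistently — this fixed-point-in-$M$ step needs a little care and is one delicate point. Then, using the truncated nonlinearity
\begin{equation*}
\hat f(x,t):=f\bigl(x,\min\{\max\{t,\underline u(x)\},\overline u(x)\}\bigr),
\end{equation*}
which by \eqref{hypf} is bounded above by $c_1\underline u^{-\gamma}+c_2\overline u^{\,r}\in L^1(\Omega)$ times nothing worse than a fixed $L^\nu$ function, I would minimize the coercive, weakly lower semicontinuous energy
\begin{equation*}
J(u):=\frac1p\|u\|_{s_1,p}^p+\frac1q[u]_{s_2,q}^q-\int_\Omega\Bigl(\int_0^{u}\hat f(x,\tau)\dtau\Bigr)\dx-\int_\Omega h(x)u\dx
\end{equation*}
over $W^{s_1,p}_0(\Omega)$ (coercivity: the primitive of $\hat f$ grows at most linearly in $u$ because $\hat f$ is squeezed between fixed $L^1$ bounds, and $\int hu$ is controlled by the embedding $W^{s_1,p}_0\hookrightarrow L^{(p/\zeta)'}$, with $(p/\zeta)'<p^*_{s_1}$). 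A minimizer $u_v$ is a weak solution of the truncated problem.

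\medskip

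Then I would show $\underline u\le u_v\le\overline u$ by testing the weak formulations with $(\underline u-u_v)_+$ and $(u_v-\overline u)_+$ respectively and invoking strict monotonicity, so that $\hat f(x,u_v)=f(x,u_v)$ and $u_v$ solves \eqref{auxprob}; the inequality $u_v\ge\underline u$ is exactly the claimed conclusion. Finally, for regularity: once $\underline u\le u_v\le\overline u$ we know $u_v\in L^\infty(\Omega)$ and $f(\cdot,u_v)+h\in L^m(\Omega)$ for some $m$ large enough (using $f(x,u_v)\le c_1\underline u^{-\gamma}+c_2\|u_v\|_\infty^r$, with $\underline u^{-\gamma}\sim d^{-\gamma s_1}\in L^\nu$ by Proposition \ref{distsumm}, and $h\in L^{p/\zeta}$), so the global Hölder regularity results for the fractional $(p,q)$-Laplacian from \cite{G} (or \cite{IMS,IMS2}) give $u_v\in C^{0,\tau}(\overline\Omega)$ for some $\tau\in(0,s_1)$.

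\medskip

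The main obstacle I expect is twofold: (i) making the super-solution construction rigorous, since the right-hand side $c_1\underline u^{-\gamma}+c_2M^r+h$ is singular near $\partial\Omega$ and one must verify it defines an element of $W^{-s_1,p'}(\Omega)$ (which forces a constraint relating $\gamma$, $s_1$, $p$, $N$ — here $\gamma<1$ and $s_1p>1$ are exactly what makes Proposition \ref{distsumm} applicable), and one must run a self-consistent choice of the constant $M$ (essentially a monotone iteration or a Schauder argument in one real parameter); and (ii) justifying that the energy functional $J$ is well-defined and $C^1$ despite the singular term — this is standard for weakly singular problems ($\gamma<1$) via the bound $\int_0^u\hat f\,\dtau\le c_1\underline u^{-\gamma}u+\tfrac{c_2}{r+1}\overline u^{\,r}u$ and $\underline u^{-\gamma}u\le C d^{-\gamma s_1}|u|$ with $d^{-\gamma s_1}\in L^{p'}$, but the exponent bookkeeping must be done carefully.
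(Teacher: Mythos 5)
Your overall architecture (truncate, minimize, compare, then invoke regularity) matches the paper's, but there is one genuine gap and one unnecessary detour. The gap is in your treatment of the singular term: you justify well-definedness and coercivity of the energy via $\underline u^{-\gamma}|u|\le C\,d^{-\gamma s_1}|u|$ together with the claim $d^{-\gamma s_1}\in L^{p'}(\Omega)$. By Proposition \ref{distsumm} that membership requires $\gamma s_1 p'<1$, i.e. $s_1<\frac{1}{p'\gamma}$, which is \emph{not} among the hypotheses of Theorem \ref{existence} (it is only assumed later, in Lemma \ref{Schau}), and it fails for $\gamma$ and $s_1$ close to $1$; even the weaker requirement $d^{-\gamma s_1}\in L^{(p^*_{s_1})'}(\Omega)$ needed for a naive H\"older estimate is not guaranteed. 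The paper's proof avoids this by writing $d^{-\gamma s_1}|u|=d^{(1-\gamma)s_1}\,\frac{|u|}{d^{s_1}}$, bounding $d^{(1-\gamma)s_1}$ by a constant, and invoking the fractional Hardy inequality \cite[Theorem 1.1]{Dy} --- this is precisely where the standing assumption $s_1p>1$ enters --- to get $\int_\Omega\frac{|u|}{d^{s_1}}\dx\le C\|u\|_{s_1,p}$. Without this (or an equivalent device) your energy functional is not known to be well defined or coercive under the stated hypotheses.

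The detour: the super-solution $\overline u$ and the self-consistent choice of $M$, which you correctly flag as delicate and leave unproved, are not needed. The paper truncates only from below, setting $\tilde f(x,t):=f(x,\max\{\underline u(x),t\})$; then $\tilde f(\cdot,t)\le C_1(d^{-\gamma s_1}+t^r+1)$, and since $r,\zeta\in(1,p-1)$ the associated functional is coercive on all of $W^{s_1,p}_0(\Omega)$, so a global minimizer exists with no a priori $L^\infty$ bound. The comparison $u_v\ge\underline u$ is then obtained by testing with $(\underline u-u_v)_+$; note that concluding $(\underline u-u_v)_+=0$ from $\langle(-\Delta)^{s_1}_p\underline u-(-\Delta)^{s_1}_p u_v,(\underline u-u_v)_+\rangle\le0$ is not plain ``strict monotonicity'' (the test function is the positive part, not the full difference) --- the paper cites Lemma 9 of \cite{LL} for this nonlocal comparison step. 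The final $C^{0,\tau}(\overline\Omega)$ regularity is quoted from \cite[Corollary 2.10]{G}, as you do.
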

\begin{proof}
Recalling Lemma \ref{torsionproblem}, define
\begin{equation}\label{deftildef}
\tilde{f}(x,t):=f(x,\max\{\underline{u}(x),t\}),\quad (x,t)\in\Omega\times\R.     
\end{equation}
Without loss of generality we can suppose $s_1\not=q's_2$. In fact, by Remark \ref{usin}, the case $s_1=q's_2$ is entirely analogous. Thanks to \eqref{hypf} and \eqref{hopf} one has
\begin{equation}\label{tildef}
\begin{split}
\tilde{f}(\cdot, t) &
\le c_1(\max\{\underline{u},t\})^{-\gamma}+c_2(\max\{\underline{u},t\})^{r}
\le c_1\underline{u}^{-\gamma}+c_2(\underline{u}^r+t^r)\\
&\le c_1(\eta d^{s_1})^{-\gamma}+c_2 (\max_{\overline{\Omega}}\underline{u})^r+ c_2 t^r
\le C_1(d^{-\gamma s_1}+t^r+1),\;\; t\in\R^+.
\end{split}
\end{equation}
The energy functional $\tilde{J}:W^{s_1,p}_0(\Omega)\to\R$ associated with the problem
\begin{equation}\label{frizedprob}
\left\{
\begin{alignedat}{2}
(-\Delta)^{s_1}_p u+(-\Delta)^{s_2}_q u & = \tilde f(x,u)+g(x,D^s v) && \quad \mbox{in}\;\;\Omega,\\
u & =0 && \quad\mbox{in}\;\;\R^N\setminus\Omega,
\end{alignedat}
\right.
\end{equation}
is written as
\begin{equation*}
\begin{split}
\tilde{J}(u):=
\frac{1}{p} & \int_{\R^N\times\R^N}\frac{|u(x)-u(y)|^{p}}{|x-y|^{N+s_1p}}\dx\dy+ \frac{1}{q}\int_{\R^N\times\R^N}\frac{|u(x)-u(y)|^{q}}{|x-y|^{N+s_2q}}\dx\dy\\
- & \int_{\Omega}\tilde{F}(\cdot,u)\dx-\int_{\Omega} G(\cdot,D^sv)\dx,
\quad u\in W^{s_1,p}_0(\Omega),
\end{split} 
\end{equation*}
where
$$\tilde{F}(x,\tau):=\int_0^\tau\tilde{f}(x,t)\dt,\qquad 
G(x,\xi):=\int_0^\tau g(x,\xi)\dt =\tau g(x, \xi).$$
Obviously, $\tilde{J}$ turns out well defined and of class $C^1$. Moreover, \eqref{tildef}
easily entails
\begin{equation}\label{new}
\tilde{F}(x,\tau)\leq\int_0^{|\tau|}\tilde{f}(x,t)\dt\le
C_1\left[(d^{-\gamma s_1}+1)|\tau|+\frac{|\tau|^r}{r+1}\right]\quad\forall\, \tau\in\R,
\end{equation}
because $\tilde{f}(x,t)\geq 0$. From \eqref{new}, \eqref{hypg}, H\"older's inequality, and fractional Hardy’s inequality \cite[Theorem 1.1]{Dy} (recall that $s_1p>1$) it follows 
\begin{equation*}
\begin{split}
\tilde{J}(u) & \ge\frac{1}{p}
\int_{\R^N\times\R^N}\frac{|u(x)-u(y)|^{p}}{|x-y|^{N+s_1p}}\dx\dy
-C_1\int_\Omega d^{-\gamma s_1}|u|\dx
-\frac{C_1}{r+1}\int_\Omega |u|^{r+1} \dx\\
& -c_3\int_\Omega |D^s v|^\zeta |u|\dx-(C_1+c_3)\int_\Omega |u|\dx\\
& \ge\frac{1}{p}\int_{\R^N\times\R^N}\frac{|u(x)-u(y)|^{p}}{|x-y|^{N+s_1p}}\dx\dy
-C_1\int_\Omega d^{(p-\gamma)s_1}\frac{|u|}{d^{s_1p}}\dx
-\frac{C_1}{r+1}\int_\Omega |u|^{r+1} \dx\\
& -c_3\int_\Omega |D^s v|^\zeta |u|\dx-C_2\|u\|_p\\
& \ge\frac{1}{p}\|u\|^p_{s_1,p}-C_3(\|u\|_{s_1,p}+\|u\|_p^{r+1}+\|D^sv\|_p^\zeta\|u\|_{\frac{p}{p-\zeta}}+\|u\|_p),\quad
u\in W^{s_1,p}_0(\Omega).
\end{split} 
\end{equation*}
Since $r,\zeta\in (1,p-1)$, through Proposition \ref{fracemb} (b) we see that $\tilde{J}$ is coercive. Thus, by Weierstrass-Tonelli's theorem, there exists $u_v\in W^{s_1,p}_{0}(\Omega)$ fulfilling
$$\tilde{J}(u_v) =\inf_{u\in W^{s_1,p}_{0}(\Omega)}\tilde{J}(u),$$
whence $u_v$ turns out a weak solution to \eqref{frizedprob}. As in the proof of \cite[Proposition 2.10]{IMS} one has $(\underline{u}-u_v)_+\in W^{s_1,p}_0(\Omega)$. Testing \eqref{frizedprob} with $\phi:=(\underline{u}-u_v)_+$ yields
\begin{equation*}
\langle(-\Delta)^{s_1}_p u_v+(-\Delta)^{s_2}_q u_v,\phi\rangle
=\int_\Omega\tilde{f}(\cdot,u_v)\phi\dx+\int_\Omega g(\cdot,D^s v)\phi\dx.   
\end{equation*}
Lemma \ref{torsionproblem} and the inequality $g(x,\xi)\geq0$ produce
\begin{equation*}
\langle(-\Delta)^{s_1}_p\underline{u}+(-\Delta)^{s_2}_q\underline{u},\phi\rangle \leq\int_\Omega f(\cdot,\underline{u})\phi\dx
\le\int_\Omega f(\cdot,\underline{u})\phi\dx+\int_\Omega g(\cdot,D^s v)\phi\dx.  
\end{equation*}
Therefore, by $({\rm p}_1)$ and \eqref{deftildef}, 
\begin{equation*}
\begin{split}
\langle(-\Delta)^{s_1}_p \underline{u}-(-\Delta)^{s_1}_p u_v,\phi\rangle & \\
& \le\langle(-\Delta)^{s_1}_p \underline{u}-(-\Delta)^{s_1}_p u_v,\phi\rangle 
+\langle(-\Delta)^{s_2}_q\underline{u}-(-\Delta)^{s_2}_{q} u_v,\phi\rangle\\
&\le\int_{\Omega(u_v<\underline{u})}(f(\cdot,\underline{u})-\tilde{f}(\cdot,u_v)) (\underline{u}-u_v)\dx\\
& = \int_{\Omega(u_v<\underline{u})}(f(\cdot,\underline{u})-f(\cdot,\underline{u}))(\underline{u}-u_v)\dx=0.
\end{split} 
\end{equation*}
Now, Lemma 9 in \cite{LL} forces
\begin{equation*}
0<\underline{u}\le u_v.
\end{equation*} 
Consequently, $u_v\in W^{s_1,p}_0(\Omega)_+$ weakly solves \eqref{auxprob}. Corollary 2.10 of \cite{G} then ensures that $u_v\in C^{0,\tau}(\overline{\Omega})$ for all $\tau\in (0, s_1)$.
\end{proof}
\begin{lemma}\label{hidden convexity}
If $0<s<1$ while $\Phi:W^{s,p}_0(\Omega)\to\R^+_0$ is defined by
\begin{equation*}
\Phi(u):=\frac{1}{p}\int_{\R^{2N}}\frac{|u(x)-u(y)|^p}{|x-y|^{N+sp}}\dx\dy\quad\forall \, u\in W^{s,p}_0(\Omega) 
\end{equation*}
then the operator
\begin{equation*}
\hat\Phi(w):=\begin{cases}
\Phi\left(w^{1/q}\right) & \text{when $w\ge 0$ and $w^{1/q}\in W^{s,p}_0 (\Omega)$},\\
+\infty & \text{otherwise,}
\end{cases}
\end{equation*}
has a nonempty domain and is convex.
\end{lemma}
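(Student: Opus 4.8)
The plan is to reduce the convexity of $\hat\Phi$ to a pointwise inequality à la ``hidden convexity'' (in the spirit of Benguria--Brezis--Lieb and its fractional analogues). First I would show that the domain is nonempty: for any $u\in W^{s,p}_0(\Omega)_+$ that is bounded and bounded away from $0$ on its support in a controlled way — e.g. $\underline u$ from Lemma~\ref{torsionproblem}, or more simply a suitable power of a first eigenfunction — the function $w:=u^q$ is nonnegative, vanishes outside $\Omega$, and $w^{1/q}=u\in W^{s,p}_0(\Omega)$, so $w\in\dom(\hat\Phi)$. (One can even take $w$ smooth and compactly supported in $\Omega$ and check $w^{1/q}\in W^{s,p}_0(\Omega)$ directly from the Gagliardo seminorm, since near the boundary $w\equiv 0$.)

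Next, for the convexity, take $w_0,w_1\ge 0$ in $\dom(\hat\Phi)$ and $\lambda\in(0,1)$, and set $w_\lambda:=(1-\lambda)w_0+\lambda w_1$, $u_i:=w_i^{1/q}$, $u_\lambda:=w_\lambda^{1/q}$. I must show $\hat\Phi(w_\lambda)\le(1-\lambda)\hat\Phi(w_0)+\lambda\hat\Phi(w_1)$, i.e.
\begin{equation*}
[u_\lambda]_{s,p}^p\le(1-\lambda)[u_0]_{s,p}^p+\lambda[u_1]_{s,p}^p .
\end{equation*}
Because the Gagliardo seminorm to the $p$-th power is $\int_{\R^{2N}}|u(x)-u(y)|^p\,|x-y|^{-(N+sp)}\dx\dy$, it suffices to prove the pointwise bound
\begin{equation*}
\bigl|u_\lambda(x)-u_\lambda(y)\bigr|^p\le(1-\lambda)\bigl|u_0(x)-u_0(y)\bigr|^p+\lambda\bigl|u_1(x)-u_1(y)\bigr|^p
\end{equation*}
for a.e.\ $(x,y)$, and then integrate against the (positive) kernel. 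In particular this also shows $u_\lambda\in W^{s,p}_0(\Omega)$ whenever $u_0,u_1$ are, so $w_\lambda\in\dom(\hat\Phi)$ and no ``$+\infty$'' case intervenes. The pointwise inequality reduces, after relabeling $a_i:=u_i(x)$, $b_i:=u_i(y)\ge0$, to the scalar claim: for $a_0,a_1,b_0,b_1\ge0$,
\begin{equation*}
\Bigl|\bigl((1-\lambda)a_0^q+\lambda a_1^q\bigr)^{1/q}-\bigl((1-\lambda)b_0^q+\lambda b_1^q\bigr)^{1/q}\Bigr|^p\le(1-\lambda)|a_0-b_0|^p+\lambda|a_1-b_1|^p .
\end{equation*}

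To prove this scalar inequality I would argue in two stages. Since $p>q>1$, write $p=q\cdot(p/q)$ with $p/q>1$, so $t\mapsto t^{p/q}$ is convex and increasing on $\R^+_0$; thus it is enough to establish the ``inner'' $q$-version
\begin{equation*}
\Bigl|\bigl((1-\lambda)a_0^q+\lambda a_1^q\bigr)^{1/q}-\bigl((1-\lambda)b_0^q+\lambda b_1^q\bigr)^{1/q}\Bigr|^q\le(1-\lambda)|a_0-b_0|^q+\lambda|a_1-b_1|^q,
\end{equation*}
because then raising to the power $p/q$ and using convexity of $t\mapsto t^{p/q}$ together with $(1-\lambda)|a_0-b_0|^q+\lambda|a_1-b_1|^q$ being a convex combination gives the desired estimate. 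The $q$-version is the classical hidden-convexity inequality: the map $(a,b)\mapsto (a^q,b^q)$ on $\R^+_0\times\R^+_0$ pushes forward the metric $|a-b|$ in the right direction, equivalently $F(w):=|w_1^{1/q}-w_2^{1/q}|^q$ is convex on $(\R^+_0)^2$ (here I think of $w=(w_1,w_2)$ with $w_i\ge0$). One proves this by a direct computation of the Hessian, or by the standard trick: by homogeneity of degree one it suffices to check convexity along the line, reducing to a one-variable inequality that follows from the convexity of $\tau\mapsto\tau^q$ and of $\tau\mapsto(1-\tau^{1/q})^q$-type expressions; alternatively one invokes \cite[Lemma]{} — but I would just cite the known fractional hidden-convexity lemma (as in the references on the fractional $p$-Laplacian) if the authors have it available, otherwise include the elementary Hessian check.

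The main obstacle I anticipate is precisely this last scalar/Hessian inequality: unlike the homogeneous case $p=q$ (where it is the familiar Benguria--Brezis--Lieb identity), the exponent mismatch $p>q$ forces the two-step argument above, and one has to be careful that the outer composition with $t\mapsto t^{p/q}$ genuinely preserves the convex-combination structure — it does, because $t^{p/q}$ is convex and nondecreasing and the inner inequality already has the correct convex-combination right-hand side. A minor secondary point is the measurability/integrability bookkeeping: one should note that the pointwise inequality, once integrated against the positive kernel $|x-y|^{-(N+sp)}$, simultaneously yields finiteness of $[u_\lambda]_{s,p}$ (hence $w_\lambda\in\dom\hat\Phi$) and the convexity estimate, so the ``otherwise $+\infty$'' branch never spoils convexity. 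No boundary subtleties arise since all functions vanish outside $\Omega$.
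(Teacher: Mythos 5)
Your proof is correct and follows essentially the same route as the paper: the heart of both arguments is the pointwise discrete hidden-convexity inequality
$\bigl|\bigl((1-\lambda)a_0^q+\lambda a_1^q\bigr)^{1/q}-\bigl((1-\lambda)b_0^q+\lambda b_1^q\bigr)^{1/q}\bigr|^p\le(1-\lambda)|a_0-b_0|^p+\lambda|a_1-b_1|^p$,
which the paper simply quotes from \cite[Proposition 4.1]{BF} and then integrates against the positive kernel, exactly as you do. Your two deviations are both sound and minor: the two-step derivation of that scalar inequality (the $q$-homogeneous Benguria--Brezis--Lieb case composed with the convex increasing map $t\mapsto t^{p/q}$) is a legitimate self-contained substitute for the citation, and your observation that $w:=u^q$ for any nonnegative $u\in W^{s,p}_0(\Omega)$ already lies in $\dom\hat\Phi$ settles nonemptiness far more quickly than the paper's explicit Gagliardo-seminorm computation for $u^{l}$ with $l>q$ and bounded nonnegative $u\in W^{1,p}_0(\Omega)$.
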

\begin{proof}
Pick $l>q$ and a non-negative $u\in W^{1,p}_0(\Omega)\cap L^\infty(\Omega)$. One has $u^{l/q}\in W^{1,p}_0(\Omega)$, because
$$\int_\Omega |\nabla(u^{l/q})|^p\dx
=\int_\Omega\left(\frac{l}{q} u^{l/q-1}|\nabla u|\right)^p\dx
\le C\int_\Omega|\nabla u|^p \dx<+\infty.$$
So, $u^{l/q}\in W^{s,p}_0(\Omega)$ by Proposition \ref{fracemb}. Here, as usual, $u\equiv 0$ on $\R^N\setminus\Omega$. We claim that $\hat{\Phi}(u^l)<+\infty$. In fact,
\begin{equation*}
\begin{split}
\hat{\Phi} & (u^l)=\Phi(u^{l/q})
=\frac{1}{p}\int_{\R^{2N}}\frac{|u(x)^{l/q}-u(y)^{l/q}|^p}{|x-y|^{N+sp}}\dx\dy\\
& =\frac{1}{p}
\int_{\R^N}\dx\int_{B_1(x)}\frac{|u(x)^{l/q}-u(y)^{l/q}|^p}{|x-y|^{N+sp}}\dy
+\frac{1}{p}
\int_{\R^N}\dx\int_{\R^N\setminus B_1(x)}
\frac{|u(x)^{l/q}-u(y)^{l/q}|^p}{|x-y|^{N+sp}}\dy.
\end{split}  
\end{equation*}
Moreover,
\begin{equation*}
\begin{split}
& \int_{\R^N}\dx\int_{B_1(x)}\frac{|u(x)^{l/q}-u(y)^{l/q}|^p}{|x-y|^{N+sp}}\dy
=\int_{\R^N}\dx\int_{B_1(0)}\frac{|u(x)^{l/q}-u(x+y)^{l/q}|^p}{|y|^{N+sp}}\dy\\
& = \int_{\R^N}\dx\int_{B_1(0)}\frac{|u(x)^{l/q}-u(x+y)^{l/q}|^p}{|y|^p}
\frac{1}{|y|^{N+(s-1)p}}\dy\\
& \le\int_{\R^N}\dx\int_{B_1(0)}\left(\int_0^1 |\nabla u(x+\tau y)^{l/q}|\dtau \right)^p \frac{1}{|y|^{N+(s-1)p}}\dy\\
& \le C\int_{\R^N}\int_{B_1(0)}\int_0^1
\frac{|\nabla u(x+\tau y)^{l/q}|^p}{|y|^{N+(s-1)p}}\dx\dy\dtau\\
& \le C\int_{B_1(0)}\int_0^1\frac{\|\nabla u^{l/q}\|^p_{L^p(\R^N)}}{|z|^{N+(s-1)p}}\dy\dtau\le C_1\|\nabla u^{l/q}\|^p_{L^p(\R^N)}<+\infty.
\end{split}
\end{equation*}
Likewise,
\begin{equation*}
\begin{split}
\int_{\R^N}\dx & \int_{\R^N\setminus B_1(x)}
\frac{|u(x)^{l/q}-u(y)^{l/q}|^p}{|x-y|^{N+sp}}\dy\\
&\le 2^{p-1}\int_{\R^N}\dx\int_{\R^N\setminus B_1(x)}
\frac{|u(x)^{l/q}|^p+|u(y)^{l/q}|^p}{|x-y|^{N+sp}}\dy\\
& = 2^{p-1}\int_{\R^N}\dx\int_{\R^N\setminus B_1(0)}
\frac{|u(x)^{l/q}|^p+|u(x+y)^{l/q}|^p}{|y|^{N+sp}}\dy\\
& \le C_3\int_{\R^N} |u(x)^{l/q}|^p\dx=C_3\Vert u^{l/q}\Vert_p^p<+\infty
\end{split}
\end{equation*}
because $u^{l/q}\in W^{1,p}_0(\Omega)$. Hence, $u^l\in\dom\hat{\Phi}$, and the first conclusion follows. Next, let $u_1, u_2\in\dom\hat{\Phi}$ and let $t\in (0,1)$. If $v_i:= u_i^{1/q}$, $i=1,2$, and 
$$v_3:=((1-t)u_1+tu_2)^{1/q}$$
then, thanks to discrete hidden convexity \cite[Proposition 4.1]{BF},
\begin{equation*}
|v_3(x)-v_3(y)|^p\le (1-t)|v_1(x)-v_1(y)|^p+t |v_2(x)-v_2(y)|^p\quad\forall\, x, y \in \R^N.
\end{equation*}
This entails
\begin{equation*}\begin{split}
\hat\Phi((1-t)u_1+tu_2) & =\frac{1}{p}\int_{\R^{2N}}\frac{|v_3(x)-v_3(y)|^p}{|x-y|^{N+sp}}\dx\dy\\
& \le\frac{1-t}{p}\int_{\R^{2N}}\frac{|v_1(x)-v_1(y)|^p}{|x-y|^{N+sp}}\dx\dy
+\frac{t}{p}\int_{\R^{2N}}\frac{|v_2(x)-v_2(y)|^r}{|x-y|^{N+sp}}\dx\dy\\
& = (1-t)\hat\Phi (u_1)+t\hat\Phi (u_2),
\end{split}
\end{equation*}
thus completing the proof.
\end{proof}
\begin{rmk}
The above result holds true even when $q=p$, with the same proof.  
\end{rmk}
\begin{thm}
Under \eqref{hypf}--\eqref{hypf2} and \eqref{hypg}, for every fixed $v\in W^{s_1,p}_0(\Omega)$, the solution $u_v\in W^{s_1,p}_0(\Omega)\cap C^{0,\tau}(\overline{\Omega})$ to problem \eqref{auxprob} given by Theorem \ref{existence} is unique.
\end{thm}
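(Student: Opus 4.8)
The plan is to prove uniqueness by a convexity/monotonicity argument exploiting hypothesis \eqref{hypf2}, which says that $t\mapsto f(\cdot,t)/t^{q-1}$ is strictly decreasing. Suppose $u_1,u_2\in W^{s_1,p}_0(\Omega)\cap C^{0,\tau}(\overline{\Omega})$ are both weak solutions of \eqref{auxprob} (with the same frozen $v$); by Theorem \ref{existence} both satisfy $u_i\ge\underline{u}>0$, so in particular they are bounded away from zero on compact subsets and comparable to $d^{s_1}$ near $\partial\Omega$. The natural test functions for this kind of argument are of the form
\begin{equation*}
\phi_1:=\frac{u_1^q-u_2^q}{u_1^{q-1}},\qquad \phi_2:=\frac{u_2^q-u_1^q}{u_2^{q-1}},
\end{equation*}
and the first task is to check that $\phi_1,\phi_2\in W^{s_1,p}_0(\Omega)$ — this is where one uses $u_i\ge\underline{u}\ge\eta d^{s_1}$ together with $u_i\in C^{0,\tau}(\overline\Omega)$, much as in the regularity/comparison arguments already invoked from \cite{IMS,G}.

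Next I would test the weak formulation for $u_1$ with $\phi_1$ and the one for $u_2$ with $\phi_2$, and add. On the reaction side the $g(\cdot,D^sv)$ terms are \emph{identical} for both equations (the convection is frozen), so they contribute $\int_\Omega g(\cdot,D^sv)(\phi_1+\phi_2)\dx$, and after summing the right-hand side collapses to
\begin{equation*}
\int_\Omega\left(\frac{f(\cdot,u_1)}{u_1^{q-1}}-\frac{f(\cdot,u_2)}{u_2^{q-1}}\right)(u_1^q-u_2^q)\dx
+\int_\Omega g(\cdot,D^sv)(\phi_1+\phi_2)\dx.
\end{equation*}
By \eqref{hypf2} the integrand of the first integral is $\le 0$ pointwise (strictly negative wherever $u_1\ne u_2$), while $\phi_1+\phi_2=(u_1^q-u_2^q)(u_1^{1-q}-u_2^{1-q})\le 0$ and $g\ge0$, so the whole right-hand side is $\le 0$. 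On the operator side, the key is that the left-hand side is $\ge 0$: this is precisely the content of the hidden-convexity Lemma \ref{hidden convexity}. Indeed, for the $(-\Delta)^{s_2}_q$ part one uses that $\hat\Phi$ is convex, so that with $w_i:=u_i^q$ the subdifferential inequality gives $\langle\hat\Phi'(w_1)-\hat\Phi'(w_2),w_1-w_2\rangle\ge0$, and $\hat\Phi'(u_i^q)$ applied to $w_1-w_2$ reproduces exactly $\langle(-\Delta)^{s_2}_q u_i,\phi_i\rangle$ up to the constant $1/q$ (this is the standard way the test functions $\phi_i$ arise). The same scheme applies to the $(-\Delta)^{s_1}_p$ term: one invokes Lemma \ref{hidden convexity} with $p$ in place of both $s$-exponents (and the Remark after it covers the relevant degenerate case), obtaining nonnegativity of $\langle(-\Delta)^{s_1}_p u_1-(-\Delta)^{s_1}_p u_2,\phi_1\rangle+\langle(-\Delta)^{s_1}_p u_2-(-\Delta)^{s_1}_p u_1,\phi_2\rangle$. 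Hence the sum of the two operator terms is $\ge0$, forcing every inequality to be an equality; in particular $\int_\Omega\big(f(\cdot,u_1)u_1^{1-q}-f(\cdot,u_2)u_2^{1-q}\big)(u_1^q-u_2^q)\dx=0$, and strict monotonicity in \eqref{hypf2} yields $u_1=u_2$ a.e.

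The main obstacle I anticipate is \emph{not} the algebra but the admissibility of the test functions $\phi_1,\phi_2$: one must verify $u_i^q/u_j^{q-1}\in W^{s_1,p}_0(\Omega)$ and that the pairing $\langle(-\Delta)^{s_1}_p u_i,\phi_j\rangle$ is well defined and finite, which requires the quantitative lower bound $u_i\gtrsim d^{s_1}$ from Hopf (Remark \ref{usin}) together with the Hölder regularity $u_i\in C^{0,\tau}(\overline\Omega)$, and a Hardy-type estimate of the sort used in the proof of Theorem \ref{existence}. A secondary subtlety is making precise the identity linking $\hat\Phi'(u_i^q)[\,u_1^q-u_2^q\,]$ with $q\langle(-\Delta)^{s_2}_q u_i,\phi_i\rangle$: one should either compute the Gateaux derivative of $\hat\Phi$ directly, or — to avoid differentiating through the $w\mapsto w^{1/q}$ substitution — argue via the convexity inequality $\hat\Phi(w_2)\ge\hat\Phi(w_1)+\langle\hat\Phi'(w_1),w_2-w_1\rangle$ in its finite-difference form, exactly mirroring the pointwise inequality from \cite[Proposition 4.1]{BF} used in Lemma \ref{hidden convexity}, and then sum the two symmetric inequalities. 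Once admissibility is settled, the rest is a clean sign chase.
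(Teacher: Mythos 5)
Your proposal is correct and follows essentially the same route as the paper: the same test functions (your $\phi_2$ is the negative of the paper's $\psi$, so your ``add'' is its ``subtract''), the same appeal to the hidden-convexity Lemma \ref{hidden convexity} for both operators, the same sign analysis of the $f$- and $g$-terms, and the same conclusion from strict monotonicity in \eqref{hypf2}. One small slip: the quantity whose nonnegativity the convexity yields is $\langle(-\Delta)^{s_1}_p u_1,\phi_1\rangle+\langle(-\Delta)^{s_1}_p u_2,\phi_2\rangle$, not the cross-difference you display, but your derivation via $\langle\hat\Phi'(w_1)-\hat\Phi'(w_2),w_1-w_2\rangle\ge0$ with $w_i:=u_i^q$ is exactly the intended (and the paper's) argument.
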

\begin{proof}
Suppose $u_v, w_v\in W^{s_1,p}_0(\Omega)\cap C^{0,\tau}(\overline{\Omega})$ solve \eqref{auxprob}, namely
\begin{equation}\label{uv}
\langle(-\Delta)^{s_1}_p u_v+(-\Delta)^{s_2}_q u_v,\phi\rangle=
\int_\Omega f(\cdot,u_v)\phi\dx+\int_\Omega g(\cdot,D^s v)\phi\dx,
\end{equation}
\begin{equation}\label{wv}
\langle(-\Delta)^{s_1}_p w_v+(-\Delta)^{s_2}_q w_v,\psi\rangle=
\int_\Omega f(\cdot,w_v)\psi\dx+\int_\Omega g(\cdot,D^s v)\psi\dx
\end{equation}
for all $\phi,\psi\in W^{s_1, p}_0(\Omega)$. The functions 
$$\phi:=\frac{{u_v}^q-{w_v}^q}{{u_v}^{q-1}}\quad\mbox{and}\quad
\psi:=\frac{{u_v}^q-{w_v}^q}{w_v^{q-1}}$$
lie in $W^{s_1, p}_0(\Omega)$, because $u_v,w_v\in C^{0,\tau}(\overline{\Omega})_+$. Hence, via \eqref{uv}--\eqref{wv} we achieve
\begin{equation}\label{subtraction}
\begin{split}
\langle( & -\Delta)^{s_1}_p u_v,\phi\rangle-\langle(-\Delta)^{s_1}_p w_v,\psi\rangle
+\langle(-\Delta)^{s_2}_{q}u_v,\phi\rangle-\langle(-\Delta)^{s_2}_{q}w_v,\psi\rangle\\
& =\int_\Omega\left(\frac{f(\cdot,u_v)}{u_v^{q-1}}
-\frac{f(\cdot,w_v)}{w_v^{q-1}}\right) (u_v^q-w_v^q)\dx
+\int_\Omega g(\cdot,D^s v) (\phi-\psi)\dx.
\end{split}    
\end{equation}
Lemma \ref{hidden convexity} ensures that the functional $\hat{J}$ associated with
\begin{equation*}
J(u):=\frac{1}{p}\int_{\R^{2N}}\frac{|u(x)-u(y)|^p}{|x-y|^{N+s_1p}}\dx\dy,\quad u\in W^{s_1,p}_0(\Omega),
\end{equation*}
turns out convex. Therefore, after a standard computation,
\begin{equation}\label{convexityp}
0\le q\langle \hat{J}'(u_v^q)-\hat{ J}'(w_v^q),u_v^q-w_v^q\rangle
=\langle(-\Delta)^{s_1}_p u_v,\phi\rangle-\langle(-\Delta)^{s_1}_p w_v,\psi\rangle.
\end{equation}
An analogous argument produces 
\begin{equation}\label{convexityq}
\langle(-\Delta)^{s_2}_q u_v,\phi\rangle-\langle(-\Delta)^{s_2}_q w_v,\psi\rangle\ge 0.
\end{equation}
Now, gathering \eqref{subtraction}--\eqref{convexityq} together yields
\begin{equation}\label{nonneg}
\int_\Omega\left(\frac{f(\cdot,u_v)}{u_v^{q-1}}-\frac{f(\cdot,w_v)}{w_v^{q-1}}\right)
(u_v^q-w_v^q)\dx+\int_\Omega g(\cdot,D^s v) (\phi-\psi)\dx\geq 0.    
\end{equation}
By \eqref{hypf2} the function $t\mapsto\frac{f(\cdot,t)}{t^{q-1}}$ is decreasing on $\R^+$. This implies
\begin{equation}\label{negone}
\int_\Omega\left(\frac{f(\cdot,u_v)}{u_v^{q-1}}-\frac{f(\cdot,w_v)}{w_v^{q-1}}\right)
(u_v^q-w_v^q)\dx\leq 0.
\end{equation}
Moreover,
\begin{equation}\label{negtwo}
\begin{split}
\int_\Omega g(\cdot,D^s v)(\phi-\psi)\dx & \leq
\int_{\Omega(u_v\ge w_v)} g(\cdot,D^s v)\left(\frac{u_v^q-w_v^q}{w_v^{q-1}}-\frac{u_v^q-w_v^q}{w_v^{q-1}}\right )\dx\\
& \hskip1cm +\int_{\Omega(u_v<w_v)} g(\cdot,D^s v)\left (\frac{u_v^q-w_v^q}{u_v^{q-1}}-\frac{u_v^q-w_v^q}{w_v^{q-1}}\right )\dx\\
& \le\int_{\Omega(u_v<w_v)} g(\cdot,D^s v)\left(-\frac{w_v^q-u_v^q}{u_v^{q-1}} +\frac{w_v^q-u_v^q}{u_v^{q-1}}\right)\dx=0.
\end{split}    
\end{equation}
From \eqref{nonneg}--\eqref{negtwo} it finally follows
\begin{equation*}
\int_\Omega\left(\frac{f(\cdot,u_v)}{u_v^{q-1}}-\frac{f(\cdot,w_v)}{w_v^{q-1}}\right)
(u_v^q-w_v^q)\dx=0,    
\end{equation*}
whence, due to \eqref{hypf2} again, $u_v\equiv w_v,$ as desired.
\end{proof}
\section{Main result}\label{mainresult}
Define, for every $v\in W^{s_1,p}_0 (\Omega)$,  
\begin{equation}\label{definitionT}
T(v):= u_v,
\end{equation}
$u_v\in W^{s_1,p}_0 (\Omega)_+$ being the unique solution of \eqref{auxprob} found in Theorem \ref{existence}.
\begin{lemma}\label{Schau}
Let \eqref{hypf}, \eqref{hypf2}, \eqref{hypg} be satisfied and let $q's_2\neq s_1< \frac{1}{p'\gamma}$. Then $T$ possesses a fixed point $u\in W^{s_1,p}_0(\Omega)$.
\end{lemma}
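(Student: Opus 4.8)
The plan is to apply Schauder's fixed point theorem to the map $T$ on a suitable closed, convex, bounded subset of $W^{s_1,p}_0(\Omega)$, so the work splits into three tasks: locating an invariant ball, proving compactness of $T$, and proving continuity of $T$. First I would establish an a priori bound: testing the equation \eqref{auxprob} for $u_v = T(v)$ with $\phi = u_v$ itself, using $(\mathrm{p}_2)$ on the left, and on the right using \eqref{hypf} together with the lower bound $u_v \ge \underline{u} \ge \eta d^{s_1}$ (Remark \ref{usin}) to control $\int_\Omega u_v^{1-\gamma}\dx$ via $\int_\Omega d^{-\gamma s_1} u_v \dx$, the fractional Hardy inequality (legitimate since $s_1 < \tfrac{1}{p'\gamma}$ is exactly what makes $d^{-\gamma s_1}$ pair against $u_v/d^{s_1}$ with an $L^p$ factor that embeds), the bound on $g$ from \eqref{hypg} involving $\|D^s v\|_p^\zeta$, and Proposition \ref{fracemb}(b) for the subcritical powers $r+1$ and $\tfrac{p}{p-\zeta}$. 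This yields $\|u_v\|_{s_1,p}^p \le C(1 + \|u_v\|_{s_1,p} + \|u_v\|_{s_1,p}^{r+1} + \|D^s v\|_p^\zeta \|u_v\|_{s_1,p})$; since $r+1 < p$ and $\zeta < p-1 < p$, a Young-type absorption gives $\|u_v\|_{s_1,p} \le C(1 + \|D^s v\|_p^{\zeta/(p - 1)})$ or similar. Here I must use the crucial embedding: since $v \in W^{s_1,p}_0(\Omega) = W^{s,p}_0(\Omega)$ plus more (as $s \le s_1$, Proposition \ref{fracemb}(a)), and $W^{s,p}_0(\Omega) \hookrightarrow L^{s-\eps,p}_0(\Omega) = X^{s-\eps,p}_0(\Omega)$ by \eqref{comparisonbound}, one controls $\|D^{s-\eps} v\|_p$; a little more care (or directly bounding $\|D^s v\|_p$ by $\|v\|_{s_1,p}$ using $L^{s_1,p}_0(\Omega) \subseteq L^{s+\eps,p}_0(\Omega) \hookrightarrow W^{s,p}_0(\Omega)$ and $X^{s,p} = L^{s,p}$) shows $\|D^s v\|_p \le C\|v\|_{s_1,p}$. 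Because $\zeta/(p-1) < 1$, this sublinear growth lets me pick $R > 0$ with $\|v\|_{s_1,p} \le R \implies \|T(v)\|_{s_1,p} \le R$, so the closed ball $\overline{B}_R$ is invariant.

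Next I would prove $T$ is continuous and compact on $\overline{B}_R$. For compactness: if $\|v_n\|_{s_1,p} \le R$, then $\|T(v_n)\|_{s_1,p} \le R$, and moreover the $C^{0,\tau}(\overline{\Omega})$ bound from Corollary 2.10 of \cite{G} (applied with right-hand sides $f(\cdot,u_{v_n}) + g(\cdot,D^s v_n)$, which are bounded in the appropriate norm uniformly in $n$ thanks to \eqref{tildef}, \eqref{hypg}, Proposition \ref{distsumm}, and the uniform $L^p$-bound on $D^s v_n$) gives a uniform $C^{0,\tau}(\overline{\Omega})$ bound; Ascoli–Arzelà then extracts a subsequence converging in $C(\overline{\Omega})$, and combined with boundedness in $W^{s_1,p}_0(\Omega)$ and the reflexivity/compact embedding of Proposition \ref{fracemb}(c), one gets strong convergence of $T(v_n)$ in $W^{s_1,p}_0(\Omega)$ — actually I would route compactness through continuity plus the uniform regularity estimate, which is cleaner.

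For continuity, suppose $v_n \to v$ in $W^{s_1,p}_0(\Omega)$. I claim $D^s v_n \to D^s v$ in $L^p(\R^N)^N$: this follows from linearity and boundedness of $v \mapsto D^s v$ as a map $W^{s_1,p}_0(\Omega) \to L^p$, via the chain $W^{s_1,p}_0(\Omega) = L^{s_1,p}_0(\Omega) \subseteq L^{s+\eps,p}_0(\Omega) \hookrightarrow W^{s,p}_0(\Omega) = X^{s,p}_0(\Omega)$, on which $D^s$ acts continuously into $L^p$. Then $g(\cdot, D^s v_n) \to g(\cdot, D^s v)$ in (a suitable) $L^\rho(\Omega)$ along a subsequence, using the explicit form \eqref{hypg} and dominated convergence after passing to an a.e.-convergent subsequence. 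Let $u_n := T(v_n)$; by the invariant-ball estimate $(u_n)$ is bounded in $W^{s_1,p}_0(\Omega)$, so up to subsequence $u_n \rightharpoonup u^*$, and by Corollary 2.10 of \cite{G} plus Ascoli–Arzelà $u_n \to u^*$ in $C(\overline{\Omega})$, with $u^* \ge \underline{u} > 0$. Passing to the limit in the weak formulation: the singular term $\int_\Omega f(\cdot,u_n)\phi$ converges by dominated convergence (domination $f(\cdot,u_n)|\phi| \le C(d^{-\gamma s_1} + u_n^r + 1)|\phi|$ with the $d^{-\gamma s_1}$-term handled by Proposition \ref{distsumm} and Hölder), the convection term converges by the $L^\rho$-convergence of $g(\cdot,D^s v_n)$, and the operator terms: test with $\phi = u_n - u^*$ to get $\limsup \langle (-\Delta)^{s_1}_p u_n + (-\Delta)^{s_2}_q u_n, u_n - u^* \rangle \le 0$ (the right side tends to $0$), so by $(\mathrm{p}_1)$, Proposition \ref{sumop}, and the $(\mathrm{S})_+$ property, $u_n \to u^*$ strongly in $W^{s_1,p}_0(\Omega)$; continuity of the operators then identifies $u^*$ as a solution of \eqref{auxprob} with datum $v$, i.e. $u^* = u_v = T(v)$ by uniqueness. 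Since every subsequence has a sub-subsequence converging to the same limit $T(v)$, the whole sequence converges, so $T$ is continuous; the uniform $C^{0,\tau}$-bound simultaneously delivers compactness. Schauder's theorem applied to $T : \overline{B}_R \to \overline{B}_R$ then furnishes the fixed point. The main obstacle I anticipate is the bookkeeping needed to pass the distributional Riesz gradient continuously from $W^{s_1,p}_0(\Omega)$ into $L^p$ — one must carefully chain the Bessel-potential inclusions \eqref{comparisonbound} and the identity $X^{s,p} = L^{s,p}$ in the right direction — and making sure the integrability exponents for $g(\cdot,D^s v_n)\phi$ are compatible with the dual space $W^{-s_1,p'}(\Omega)$ via Proposition \ref{fracemb}(b) given only $\zeta < p-1$.
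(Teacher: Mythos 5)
Your proposal follows essentially the same route as the paper: an a priori estimate of the form $\|T(v)\|_{s_1,p}^p\le C(1+\|v\|_{s_1,p}^{\zeta p'})$ obtained by testing with $u_v$ and chaining $W^{s_1,p}_0(\Omega)\subseteq L^{s+\eps,p}_0(\Omega)\hookrightarrow X^{s,p}_0(\Omega)$ to control $\|D^s v\|_p$, an invariant ball from the sublinearity $\zeta p'<p$ (equivalently $\zeta/(p-1)<1$), passage to the limit in the weak formulation via dominated convergence for the singular term and $L^p$-continuity of $v\mapsto D^s v$, and Schauder. One remark in your compactness discussion is not valid as stated: a uniform $C^{0,\tau}(\overline{\Omega})$ bound plus Ascoli--Arzel\`a gives convergence of $u_n=T(v_n)$ in $C(\overline{\Omega})$, and boundedness in $W^{s_1,p}_0(\Omega)$ gives only weak convergence there; neither this nor Proposition \ref{fracemb} (c) upgrades to \emph{strong} convergence in $W^{s_1,p}_0(\Omega)$, which is what relative compactness of $T(K)$ requires. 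The correct mechanism --- used by the paper in its Claim 1 and also present inside your continuity argument --- is to test with $u_n-u$, show the right-hand side tends to $0$ (here is where $s_1<\frac{1}{p'\gamma}$ enters, via $d^{-\gamma s_1}\in L^{p'}(\Omega)$ and Proposition \ref{distsumm}), and invoke the $(\mathrm{S})_+$ property of the fractional $(p,q)$-Laplacian from Proposition \ref{sumop} and $({\rm p}_1)$; note this argument needs only $v_n\rightharpoonup v$, so it yields compactness directly without any H\"older regularity. Since you do supply that $(\mathrm{S})_+$ argument, the flawed remark is redundant rather than fatal; the only other cosmetic difference is that the paper handles the term $\int_\Omega u_v^{1-\gamma}\dx$ in the a priori bound by Young's inequality (using $1-\gamma<p$) rather than by Hardy's inequality, though your variant also works.
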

\begin{proof}
Given any $v\in W^{s_1,p}_0(\Omega)$, test problem \eqref{auxprob} with its solution $u_v$. Through \eqref{hypf} and \eqref{hypg} we thus arrive at
\begin{equation*}
\begin{split}
\|u_v\|^p_{s_1,p} & \le\int_{\R^{2N}}\frac{|u_v(x)-u_v(y)|^{p}}{|x-y|^{N+s_1p}}\dx\dy +\int_{\R^{2N}}\frac{|u_v(x)-u_v(y)|^{q}}{|x-y|^{N+s_2q}}\dx\dy\\
& \le c_1\int_\Omega u_v^{1-\gamma}\dx+c_2\int_\Omega u_v^{r+1}\dx
+c_3\int_\Omega(u_v+|D^s v|^\zeta u_v)\dx. 
\end{split}
\end{equation*}
Thanks to Young's inequality with $\eps>0$, each term of the right-hand side is estimated as follows (recall that $\gamma<1$ while $r,\zeta<p-1$):
\begin{equation*}
\begin{split}
\int_\Omega u_v^\alpha\dx & \leq\frac{\alpha}{p}\eps\Vert u_v\Vert_p^p
+\frac{p-\alpha}{p}C_\eps(\alpha)|\Omega|,\quad\alpha\in\{1-\gamma,r+1,1\};\\
\int_\Omega |D^sv|^\zeta u_v\dx & \leq\frac{1}{p}\eps\Vert u_v\Vert_p^p
+\frac{1}{p'}C_\eps\int_\Omega |D^s v|^{\zeta p'}\dx.
\end{split}
\end{equation*}
Consequently, by Proposition \ref{fracemb} (b) and \eqref{comparisonbound}, 
\begin{equation*}
\begin{split}
\|u_v\|^p_{s_1,p} & \le c\eps\|u_v\|_p^p+ C_\eps(1+\|D^s v\|^{\zeta p'}_{\zeta p'})
\le c'\eps\|u_v\|_{s_1,p}^p+ C'_\eps(1+\|D^s v\|^{\zeta p'}_p) \\
& \le c'\eps \|u_v\|_{s_1,p}^p +C'_\eps\left(1+\|v\|_{X^{s,p}_0(\R^N)}^{\zeta p'}\right)
\leq c'\eps \|u_v\|_{s_1,p}^p +C^*_\eps(1+\|v\|_{s_1,p}^{\zeta p'}). 
\end{split}
\end{equation*}
This entails
\begin{equation*}
 (1-c'\eps)\|u_v\|^p_{s_1,p}\le C^*_\eps(1+\|v\|_{s_1,p}^{\zeta p'}),
\end{equation*}
i.e., after choosing $\eps<\frac{1}{c'}$,
\begin{equation}\label{boundedness}
\| T(v)\|^{p}_{s_1,p}=\|u_v\|^{p}_{s_1,p}\le\hat{C}(1+ \|v\|^{\zeta p'}_{s_1,p}),
\end{equation}
with $\hat{C}:=\frac{C^*_\eps}{1-c'\eps}$. Since $\zeta p'<p$, there exists $\rho>0$ such that $\hat{C}(1+\rho^{\zeta p'})\le\rho^p$. Thus, due to \eqref{boundedness}, $\|v\|_{s_1,p}\le\rho$ implies $\| T(v)\|_{s_1,p}\le\rho$, which clearly means $T(K)\subseteq K$, provided
$$K:=\{u\in W^{s_1,p}_0(\Omega):\|u\|_{s_1,p}\le \rho\}.$$
\vskip1pt
\noindent {\it Claim 1:} The operator $T\lfloor_K$ is compact. 
\vskip1pt
\noindent Let $\{v_n\} \subseteq K$ and let $u_n:=T(v_n)$, $n\in\N$. The reflexivity of $W^{s_1,p}_0(\Omega)$ yields $v_n\rightharpoonup v$ in $W^{s_1,p}_0(\Omega)$ while Proposition \ref{fracemb} (c) ensures that
\begin{equation*}
\forall\, r\in [1, p_{s_1*})\;\;\text{one has}\;\; v_n\to v\;\;\text{in}\;\; L^r(\Omega),
\end{equation*}
where a sub-sequence is considered when necessary. Likewise, from $\{u_n\}\subseteq K$ it follows $u_n\rightharpoonup u$ in $W^{s_1,p}_0(\Omega)$ and, as before,  
\begin{equation}\label{u_n}
\forall\, r\in [1, p_{s_1*})\;\;\text{one has}\;\; u_n\to u\;\;\text{in}\;\; L^r(\Omega).
\end{equation} 
Now, testing \eqref{auxprob} with $\phi_n:=u_n-u$ and using \eqref{hypf}, Theorem \ref{existence}, and \eqref{hopf}, we obtain
\begin{equation*}
\begin{split}
& \langle(-\Delta)^{s_1}_p u_n+(-\Delta)^{s_2}_q u_n,\phi_n\rangle
\le\int_\Omega f(\cdot,u_n)|\phi_n|\dx+\int_\Omega g(\cdot,D^s v_n)|\phi_n|\dx\\
& \le c_1\int_\Omega u_n^{-\gamma}|\phi_n|\dx
+c_2\int_\Omega u_n^r|\phi_n|\dx +c_3\int_\Omega (1+|D^s v_n|^\zeta)|\phi_n|\dx\\
& \le \int_\Omega[c_1(\eta d^{s_1})^{-\gamma}+c_3]|\phi_n|\dx
+c_2\int_\Omega u_n^r|\phi_n|\dx +c_3\int_\Omega|D^s v_n|^\zeta |\phi_n|\dx.
\end{split}    
\end{equation*}
Hence, due to H\"older's inequality, Proposition \ref{distsumm} (recall that $s_1\gamma<\frac{1}{p'}$),  Proposition \ref{fracemb} (b), besides \eqref{comparisonbound},
\begin{equation}\label{diff}
\begin{split}
\langle(-\Delta)^{s_1}_p & u_n+(-\Delta)^{s_2}_q u_n,\phi_n\rangle\\
& \le C_1\|\phi_n\|_p
+ c_2\|u_n\|^r_p\|\phi_n\|_{\frac{p}{p-r}}
+c_3\|D^s v_n\|^\zeta_p\|\phi_n\|_{\frac{p}{p-\zeta}}\\
&\le C_1\|\phi_n\|_p+C_2\rho^r\|\phi_n\|_{\frac{p}{p-r}}
+C_3\rho^\zeta \|\phi_n\|_{\frac{p}{p-\zeta}}
\end{split}    
\end{equation}
for all $n\in\N$, where
$$C_1:=c_1\eta^{-\gamma}\Vert d^{-s_1\gamma}\Vert_{p'} +c_3|\Omega|^{1/p'}.$$ 
On account of \eqref{u_n}--\eqref{diff} one arrives at
\begin{equation*}
\limsup_{n\to+\infty}\langle(-\Delta)^{s_1}_p u_n+(-\Delta)^{s_2}_q u_n,u_n-u\rangle \le 0,   
\end{equation*}
whence $u_n\to u$ in $W^{s_1,p}_0(\Omega)$ because, by Proposition \ref{sumop} and $({\rm p}_4)$, the fractional $(p,q)$-Laplacian
$$u\mapsto (-\Delta)^{s_1}_p u_+(-\Delta)^{s_2}_q u,\quad u\in W^{s_1,p}_0(\Omega),$$
is type $({\rm S})_+$.
\vskip3pt
\noindent {\it Claim 2:} The operator $T\lfloor_K$ turns out continuous.
\vskip1pt
\noindent
Let $\{v_n\}\subseteq K$ satisfy $v_n\to v$ in $W^{s_1,p}_0(\Omega)$ and let $u_n:=T(v_n)$, $n\in\N$. Since $T\lfloor_K$ is compact, along a sub-sequence if necessary, we have $u_n\to u$ in $W^{s_1,p}_0(\Omega)$. Moreover, \eqref{u_n} holds. Our claim thus becomes $u=T(v)$. Pick any $\varphi\in W^{s_1,p}_0(\Omega)$. From \eqref{definitionT} it follows
\begin{equation}\label{u_nsolution}
\begin{split}
\int_{\R^{2N}} &
\frac{|u_n(x)-u_n(y)|^{p-2}(u_n(x)-u_n(y))(\phi(x)-\phi(y))}{|x-y|^{N+s_1p}}\dx\dy\\
& \hskip1cm+\int_{\R^{2N}}
\frac{|u_n(x)-u_n(y)|^{q-2}(u_n(x)-u_n(y))(\phi(x)-\phi(y))}{|x-y|^{N+s_2q}}\dx\dy\\
& =\int_\Omega f(\cdot,u_n)\phi\dx+\int_\Omega g(\cdot,D^s v_n)\phi\dx\quad\forall\, n\in\N.
\end{split}    
\end{equation}
Observe that
$$\{ u_n\}\subseteq K\implies 
\left\{\frac{|u_n(x)-u_n(y)|^{p-2}(u_n(x)-u_n(y))}{|x-y|^{\frac{N+s_1p}{p'}}}  \right\}\;\;\mbox{bounded in}\;\; L^{p'}(\R^{2N})$$
and that, by \eqref{u_n},
$$\lim_{n\to+\infty}
\frac{|u_n(x)-u_n(y)|^{p-2}(u_n(x)-u_n(y))}{|x-y|^{\frac{N+s_1p}{p'}}}
=\frac{|u(x)-u(y)|^{p-2}(u(x)-u(y))}{|x-y|^{\frac{N+s_1p}{p'}}}$$
for almost every $(x,y)\in\R^{2N}$. So, up to sub-sequences, 
\begin{equation*}
\frac{|u_n(x)-u_n(y)|^{p-2}(u_n(x)-u_n(y))}{|x-y|^{|x-y|^{\frac{N+s_1 p}{p'}}}}
\rightharpoonup\frac{|u(x)-u(y)|^{p-2}(u(x)-u(y))}{|x-y|^{\frac{N+s_1 p}{p'}}}
\;\;\mbox{in}\;\; L^{p'}(\R^{2N}). 
\end{equation*}
This implies
\begin{equation}\label{convergencelhs1}
\begin{split}
& \lim_{n\to+\infty}\int_{\R^{2N}}
\frac{|u_n(x)-u_n(y)|^{p-2}(u_n(x)-u_n(y))(\phi(x)-\phi(y))}{|x-y|^{N+s_1p}}\dx\dy\\
& \hskip3cm =\int_{\R^{2N}}
\frac{|u(x)-u(y)|^{p-2}(u(x)-u(y))(\phi(x)-\phi(y))}{|x-y|^{N+s_1p}}\dx\dy,
\end{split}
\end{equation}
because
$$\frac{\phi(x)-\phi(y)}{|x-y|^{\frac{N+s_1p}{p}}}\in L^p(\R^{2N}).$$ 
An analogous argument, which employs the continuous embedding $W^{s_1,p}_0(\Omega) \hookrightarrow W^{s_2,q}_0(\Omega)$ (cf. Proposition \ref{fracemb} (a)),  produces 
\begin{equation}\label{convergencelhs2}
\begin{split}
& \lim_{n\to+\infty}\int_{\R^{2N}}
\frac{|u_n(x)-u_n(y)|^{q-2}(u_n(x)-u_n(y))(\phi(x)-\phi(y))}{|x-y|^{N+s_2q}}\dx\dy\\
& \hskip3cm =\int_{\R^{2N}}
\frac{|u(x)-u(y)|^{q-2}(u(x)-u(y))(\phi(x)-\phi(y))}{|x-y|^{N+s_2q}}\dx\dy.
\end{split}
\end{equation}
Let us next focus on the right-hand side of \eqref{u_nsolution}. Exploiting \eqref{hypf}, Theorem \ref{existence}, \eqref{hopf}, \eqref{u_n}, and \cite[Theorem 4.9] {Br},  we achieve
$$|f(\cdot,u_n)\phi|\le [c_1 u_n^{-\gamma}+c_2 u_n^{r}]|\phi|
\le [c_1\underline{u}^{-\gamma}+c_2 u_n^r]|\phi|
\le [c_1(\eta d^{s_1})^{-\gamma}+c_2\psi^r]|\phi|,\quad n\in\N,$$
for some $\psi\in L^r(\Omega)$. 
Now, by \eqref{u_n} and \cite[Theorem 4.2]{Br}, one has
\begin{equation}\label{convergencerhs1}
\lim_{n\to+\infty}\int_\Omega f(\cdot,u_n)\phi\dx=\int_\Omega f(\cdot,u)\phi\dx.
\end{equation}
Finally, observe that, thanks to \eqref{comparisonbound},
\begin{equation*}
v_n\to v\;\mbox{in}\; W^{s_1,p}_0(\Omega)\implies v_n\to v\;\mbox{in}\; L^{s,p}_0(\Omega)
\end{equation*}
as well as
\begin{equation*}
v_n\to v\;\mbox{in}\; L^{s,p}_0(\Omega)\implies D^s v_n\to D^s v\;\mbox{in}\; L^p(\Omega)\implies (D^s v_n)^\zeta\to (D^s v)^\zeta\;\mbox{in}\; 
L^\frac{p}{\zeta}(\Omega),\end{equation*}
where a sub-sequence is considered if necessary. Since \eqref{hypg} holds while $\phi\in L^{\frac{p}{p-\zeta}}(\Omega)$ because $\zeta\in(1, p-1)$, this forces
\begin{equation}\label{convergencerhs2}
\lim_{n\to+\infty}\int_\Omega g(\cdot,D^s v_n)\phi\dx=
\int_\Omega g(\cdot,D^s v)\phi\dx.
\end{equation}
Letting $n\to+\infty$ in \eqref{u_nsolution} and using \eqref{convergencelhs1}--\eqref{convergencerhs2}, we arrive at $u=T(v)$.
\vskip3pt
Now, Schauder’s fixed point theorem can be applied to $T\lfloor_K$, which ends the proof.
\end{proof}
Our main result is the following:
\begin{thm}
Under hypotheses \eqref{hypf},\eqref{hypf2}, \eqref{hypg} and the conditions $q's_2\neq s_1< \frac{1}{p'\gamma}$, problem \eqref{prob} admits a weak solution $u\in W^{s_1,p}_0 (\Omega)$.
\end{thm}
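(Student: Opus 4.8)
The plan is to read the theorem off directly from Lemma \ref{Schau}, since the whole architecture needed to solve \eqref{prob} has already been put in place: freezing the convective term $g(\cdot,D^s v)$ turns \eqref{prob} into the family of auxiliary problems \eqref{auxprob}, each of which---by Theorem \ref{existence} together with the subsequent uniqueness statement---has a unique weak solution $u_v\in W^{s_1,p}_0(\Omega)\cap C^{0,\tau}(\overline{\Omega})$ with $u_v\ge\underline{u}$, and the associated solution map $T\colon v\mapsto u_v$ introduced in \eqref{definitionT} is shown in Lemma \ref{Schau} to admit a fixed point under exactly the hypotheses \eqref{hypf}, \eqref{hypf2}, \eqref{hypg} and $q's_2\neq s_1<\frac{1}{p'\gamma}$ of the present statement.

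Concretely, first I would apply Lemma \ref{Schau} to obtain $u\in W^{s_1,p}_0(\Omega)$ with $T(u)=u$. By \eqref{definitionT} this means $u=u_u$, i.e.\ $u$ is the (unique) weak solution of \eqref{auxprob} in which $v$ has been replaced by $u$ itself. Spelling out that weak formulation yields
\begin{equation*}
\langle(-\Delta)^{s_1}_p u+(-\Delta)^{s_2}_q u,\phi\rangle=\int_\Omega f(\cdot,u)\phi\dx+\int_\Omega g(\cdot,D^s u)\phi\dx\qquad\forall\,\phi\in W^{s_1,p}_0(\Omega),
\end{equation*}
which is precisely the weak formulation of \eqref{prob}. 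It then remains only to check the sign condition from the Definition of weak solution: since $u=u_u$, Theorem \ref{existence} gives $u\ge\underline{u}$, while Lemma \ref{torsionproblem} together with the Hopf-type bound \eqref{hopf} recorded in Remark \ref{usin} yields $\underline{u}(x)\ge\eta\,d(x)^{s_1}>0$ for a.e.\ $x\in\Omega$ (or the analogous estimate with exponent $\alpha>s_1$ in the case $s_1=q's_2$, which is excluded here in any event). Hence $u>0$ a.e.\ in $\Omega$, so $u\in W^{s_1,p}_0(\Omega)$ is a weak solution of \eqref{prob}.

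I do not expect any genuine obstacle at this final stage: the argument is a one-line consequence of Lemma \ref{Schau} plus the positivity already built into Theorem \ref{existence}. All the real difficulty has been front-loaded into Lemma \ref{Schau}---namely showing that $T$ maps a large ball $K$ into itself via the a priori bound \eqref{boundedness} (which crucially uses $\zeta p'<p$), that $T\lfloor_K$ is compact via Proposition \ref{sumop} and the $(\mathrm{S})_+$ property of the fractional $(p,q)$-Laplacian, and that $T\lfloor_K$ is continuous (the delicate point being the passage $D^s v_n\to D^s v$ in $L^p(\Omega)$, which rests on the embeddings \eqref{comparisonbound} between Bessel-potential and Gagliardo spaces). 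With those in hand, Schauder's theorem closes the matter, and the present theorem merely records what the fixed point of $T$ means.
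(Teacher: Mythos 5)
Your proposal is correct and follows exactly the paper's route: the authors' own proof is the one-liner ``use Lemma \ref{Schau} and note that fixed points of $T$ weakly solve \eqref{prob}.'' You have merely spelled out the details the paper leaves implicit, including the positivity check $u=u_u\ge\underline{u}\ge\eta\,d^{s_1}>0$, which is a welcome addition.
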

\begin{proof}
Simply use Lemma \ref{Schau} and note that fixed points of $T$ weakly solve \eqref{prob}.
\end{proof}
\section*{Acknowledgments}
\noindent
This study was partly funded by: Research project of MIUR (Italian Ministry of Education, University and Research) Prin 2022 {\it Nonlinear differential problems with applications to real phenomena} (Grant No. 2022ZXZTN2).

The authors are members of the {\em Gruppo Nazionale per l'Analisi Matematica, la Probabilit\`a e le loro Applicazioni}
(GNAMPA) of the {\em Istituto Nazionale di Alta Matematica} (INdAM).


\begin{thebibliography}{99}
\bibitem{BC}
J.C. Bellido, J. Cueto, and C. Mora-Corral, \textit{Non-local gradients in bounded domains motivated by continuum mechanics: Fundamental theorem of calculus and embeddings}, Adv. Nonlinear Anal. \textbf{12} (2023), paper no. 20220316.
%
\bibitem{BF}
L. Brasco and G. Franzina, \textit{Convexity properties of Dirichlet integrals and Picone-type inequalities}, Kodai Math. J. {\bf 37} (2014), 769--799.
%
\bibitem{Br}
H. Brézis, \textit{Functional analysis, Sobolev spaces and partial differential equations}, Universitext, Springer, New York, 2011.
%
\bibitem{B}
E. Bruè, M. Calzi, G.E. Comi, and G. Stefani, \textit{A distributional approach to fractional Sobolev spaces and fractional variation: asymptotics II}, C. R. Math. Acad. Sci. Paris \textbf{360} (2022), 589--626. 
%
\bibitem{CSM}
A. Canino, L. Montoro, B. Sciunzi, and M. Squassina, \textit{Nonlocal problems with singular nonlinearity}, Bull. Sci. Math. \textbf{141} (2017), 223--250.
%
\bibitem{CS1}
G.E. Comi and G. Stefani, \textit{A distributional approach to fractional Sobolev spaces and fractional variation: Existence of blow-up}, J. Funct. Anal. \textbf{277} (2019), 3373--3435.
%
\bibitem{CS2}
G.E. Comi and G. Stefani, \textit{A distributional approach to fractional Sobolev spaces and fractional variation: Asymptotics I}, Rev. Mat. Complut. \textbf{36} (2023), 491--569.
%
\bibitem{DNPV}
E. Di Nezza, G. Palatucci, and E. Valdinoci, \textit{Hitchhiker's guide to the fractional Sobolev spaces}, Bull. Sci. Math. \textbf{136} (2012), 521--573.
%
\bibitem{Dy}
B. Dyda, \textit{A fractional order Hardy inequality}, Illinois J. Math. \textbf{48} (2004), 575--588.
%
\bibitem{FI}
S. Frassu and A. Iannizzotto, \textit{Extremal constant sign solutions and nodal solutions for the fractional $p$-Laplacian}. J. Math. Anal. Appl. \textbf{501} (2021), paper no. 124205.
%
\bibitem{GMM2}
L. Gambera, S.A. Marano, and D. Motreanu, \textit{Dirichlet problems with fractional competing operators and fractional convection}, Fract. Calc. Appl. Anal. \textbf{27} (2024), 2203--2218.
%
\bibitem{G}
J. Giacomoni, D. Kumar, and K. Sreenadh, \textit{Interior and boundary regularity results for strongly nonhomogeneous $p,q$-fractional problems}, Adv. Calc. Var. \textbf{16} (2023), 467--501.
%
\bibitem{GM}
U. Guarnotta and S.A. Marano, \textit{Strong solutions to singular discontinuous $p$-Laplacian problems}, arXiv:2407.20971 [math.AP]
%
\bibitem{H}
J. Horváth, \textit{On some composition formulas}, Proc. Amer. Math. Soc. \textbf{10} (1959), 433--437.
%
\bibitem{IMS}
A. Iannizzotto, S. Mosconi, and M. Squassina, \textit{Global Hölder regularity for the fractional p-Laplacian}, Rev. Mat. Iberoam. \textbf{32} (2016), 1353–-1392.
%
\bibitem{IMS2}
A. Iannizzotto, S.J.N. Mosconi, and M. Squassina, \textit{Fine boundary regularity for the degenerate fractional p-Laplacian}, J. Funct. Anal. \textbf{279} (2020), paper no. 108659.
%
\bibitem{IPS}
A. Iannizzotto, S. Liu, K. Perera, and M. Squassina, \textit{Existence results for fractional $p$-Laplacian problems via Morse theory}, Adv. Calc. Var. \textbf{9} (2016) 10--125.
%
\bibitem{LL}
E. Lindgren and P. Lindqvist, \textit{Fractional eigenvalues}, Calc. Var. Partial Differential Equations \textbf{49} (2014) 795--826.
%
\bibitem{MM}
S.A. Marano and S.J.N. Mosconi, \textit{Asymptotics for optimizers of the fractional Hardy-Sobolev inequality} Commun. Contemp. Math.\textbf{21} (2019), paper no. 18500028.
%
\bibitem{MS} 
P. Mironescu and W. Sickel, \textit{A Sobolev non embedding}, Atti Accad. Naz. Lincei Cl. Sci. Fis. Mat. Natur. Rend. Lincei (9) Mat. Appl. \textbf{26} (2015), 291--298.
%
\bibitem{MI}
Y. Mizuta, \textit{Potential Theory in Euclidean Spaces}, Gakkotosho, Tokyo, 1996.
%
\bibitem{SSS}
A. Schikorra, T.T. Shieh, and D. Spector, \textit{$L^p$-theory for fractional gradient PDE with VMO coefficients}, Atti Accad. Naz. Lincei Cl. Sci. Fis. Mat. Natur. Rend. Lincei (9) Mat. Appl. \textbf{26} (2015), 433--443.
%
\bibitem{SS1}
T.T. Shieh and D. E. Spector, \textit{ On a new class of fractional partial differential equations}, Adv. Calc. Var. \textbf{8} (2015), 321--336.
%
\bibitem{SS2}
T.T. Shieh and D.E. Spector, \textit{ On a new class of fractional partial differential equations II}, Adv. Calc. Var. \textbf{11} (2018), 289--307.
%
\bibitem{SI}
M. \u{S}ilhav\'{y}, \textit{Fractional vector analysis based on invariance requirements (critique of coordinate approaches)}, Contin. Mech. Thermodyn. \textbf{32} (2019), 207--228.
%
\end{thebibliography}
\end{document}